\documentclass[11pt]{article}

\usepackage[utf8]{inputenc}
\usepackage[T1]{fontenc}
\usepackage{lmodern}
\usepackage[margin=1in]{geometry}
\usepackage{amsmath,amssymb,amsthm}
\usepackage{mathtools}
\usepackage{microtype}
\usepackage[numbers]{natbib}
\usepackage{hyperref}

\hypersetup{
  colorlinks=true,
  linkcolor=blue,
  citecolor=blue,
  urlcolor=blue
}

\newtheorem{theorem}{Theorem}[section]
\newtheorem{proposition}[theorem]{Proposition}
\newtheorem{lemma}[theorem]{Lemma}
\newtheorem{corollary}[theorem]{Corollary}
\newtheorem*{theoremA}{Theorem A}
\newtheorem*{theoremB}{Theorem B}
\newtheorem*{theoremC}{Theorem C}
\theoremstyle{definition}
\newtheorem{definition}[theorem]{Definition}

\theoremstyle{remark}
\newtheorem{remark}[theorem]{Remark}

\newcommand{\Ball}{\mathbb B}
\newcommand{\C}{\mathbb C}
\newcommand{\cR}{\mathcal R}
\newcommand{\D}{\mathbb D}
\newcommand{\M}{\mathbb M}
\newcommand{\N}{\mathbb N}
\newcommand{\dd}{\mathrm{d}}
\newcommand{\Aut}{\operatorname{Aut}}

\newcommand{\Comm}{\operatorname{Comm}}

\title{Von Neumann Type Inequalities for NC Operator Balls}
\author{Ran Kiri}
\date{}

\begin{document}
\maketitle

\begin{abstract}
We establish finite-dimensional von Neumann type inequalities for commuting
matrix tuples in noncommutative operator balls. Let $\Omega=\D_Q$ be the NC
operator ball of a linear pencil $Q$ with linearly independent coefficients.
If $\Omega(1)$ admits a transitive subgroup of automorphisms with NC lifts and
$\Omega(1)$ has the bounded $H^\infty$-Gleason property at the origin, then
$\|f(T)\|\leq C_{\Omega,n}\|f\|_{H^\infty(\Omega(1))}$ for every commuting
tuple $T\in\Omega(n)$, with
$C_{\Omega,n}=(2\rho_{\Omega}K_{\Omega})^{n-1}$; here $K_{\Omega}$ is the
Gleason constant and $\rho_{\Omega}$ is the sum of the suprema of the
coordinate functions on $\Omega(1)$. The scalar coordinate bounds persist at
every matrix level. The hypotheses hold for products of row
and column balls, where $\rho_{\Omega}=d$, yielding an unconditional estimate
$(2dK_{\Omega})^{n-1}$. For a general type~I Cartan domain $R_{p,q}$, we
obtain only a conditional result: the same bound holds provided the scalar
domain has the bounded $H^\infty$-Gleason property at the origin, which we
do not establish when $\min(p,q)\geq 2$.
\end{abstract}

\section*{Introduction}

The classical von Neumann inequality states that if $T$ is a contraction on a
Hilbert space and $p$ is a polynomial, then
$\|p(T)\|\leq\sup_{|z|<1}|p(z)|$. For several commuting contractions the
corresponding inequality is more delicate: it holds for two commuting
contractions by Ando's theorem \cite{ando1963pair}, but fails in general for
three or more commuting contractions, by the example of Kaijser and Varopoulos
\cite{varopoulos1974inequality}; relatedly, Parrott showed that three commuting
contractions need not admit a commuting unitary dilation
\cite{parrott1970unitary}. If the matrix
size is fixed, however, one may still hope for a von Neumann inequality up to a
constant. In the Euclidean row ball this finite-dimensional viewpoint was
developed by Hartz--Richter--Shalit, who proved that for every fixed matrix
size $n$ there is a constant controlling all commuting row-contractive
$n\times n$ tuples, uniformly in the number of variables
\cite{hartzrichtershalit2021row}. This suggests asking whether analogous
finite-dimensional estimates can be established for other natural graded
matrix domains such as noncommutative (NC) operator balls.

We give sufficient conditions for finite-dimensional von Neumann type
inequalities on noncommutative operator balls. Throughout, $Q$ is a
\emph{linear pencil}
\[
  Q(z)=A_1z_1+\cdots+A_dz_d,
  \qquad A_1,\ldots,A_d\in\M_{r,c}(\C)
\]
with linearly independent coefficients, and $\Omega=\D_Q$ is the associated
noncommutative operator ball. The examples treated below --- row and column
balls, Cartan domains of type~I, and their products --- arise from linear
pencils. NC operator balls in this setting have been studied in
\cite{sampatshalit2025classification,sampatshalit2025weakstar}, and von Neumann
type inequalities for such domains in
\cite{kalyuzhniy1997linear,ambrozie2003von}. The levels $\Omega(n)$ identify
with the open unit balls of $\M_n(E_Q)$, where $E_Q$ is the operator space
spanned by the coefficients. Moreover, the scalar coordinate bounds hold at
every matrix level; see Lemma~\ref{lem:matrix-coefficients}.

For commuting tuples $T\in\Omega(n)$, we seek estimates of the form
\[
  \|f(T)\|\leq C_{\Omega,n}\|f\|_{H^\infty(\Omega(1))}
\]
for every $f\in H^\infty(\Omega(1))$, where $C_{\Omega,n}$ depends only on the
NC ball and on $n$. Here $H^\infty(\Omega(1))$ is the classical Banach algebra
of bounded holomorphic functions on the scalar domain, and $f(T)$ is defined by
the Taylor functional calculus on commuting tuples.

Our main theorem gives sufficient conditions for such a von Neumann type
inequality. We require:

\medskip
\noindent\textbf{(A1) Transitive automorphisms.}
There is a subgroup $G\subseteq\Aut(\Omega(1))$ acting transitively on
$\Omega(1)$, such that every $\theta\in G$ extends to an NC automorphism of
$\Omega$.

\medskip
\noindent\textbf{(A2) Gleason at the origin.}
The scalar domain $\Omega(1)$ has the bounded $H^\infty$-Gleason property at
the origin: there is a constant $K_{\Omega}>0$ such that every
$f\in H^\infty(\Omega(1))$ with $f(0)=0$ admits a decomposition
\[
  f(z)=\sum_{i=1}^d z_i g_i(z),
  \qquad
  \|g_i\|_\infty\leq K_{\Omega}\|f\|_\infty,
\]
where $g_i\in H^\infty(\Omega(1))$ for $i=1,\ldots,d$. Set
\[
  \rho_{\Omega}:=\sum_{i=1}^d\ \sup_{z\in\Omega(1)}|z_i|.
\]

\begin{theoremA}
Let $\Omega=\D_Q$ be an NC operator ball satisfying \textup{(A1)} and
\textup{(A2)} above.
Then, for every commuting tuple $T\in\Omega(n)$ and every
$f\in H^\infty(\Omega(1))$,
\[
  \|f(T)\|\leq C_{\Omega,n}\|f\|_{H^\infty(\Omega(1))}
\]
with $C_{\Omega,1}=1$ and
$C_{\Omega,n}=(2\rho_{\Omega}K_{\Omega})^{n-1}$ for $n\geq 2$.
\end{theoremA}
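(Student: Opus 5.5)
We argue by induction on $n$, in the spirit of Hartz--Richter--Shalit, combining three ingredients: a reduction to the origin via the NC automorphisms of (A1), a factorization via the Gleason decomposition of (A2), and a nilpotency/joint-spectrum argument. For $n=1$ a commuting tuple in $\Omega(1)$ is a point $z\in\Omega(1)$, and $f(T)=f(z)$, so $C_{\Omega,1}=1$ holds trivially.

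\emph{Step 1 (reduction to $0$ in the joint spectrum).} Let $T\in\Omega(n)$ be commuting. Its Taylor joint spectrum $\sigma(T)$ is a finite set of joint eigenvalues. Each joint eigenvalue $\lambda$ lies in $\Omega(1)$: taking a common eigenvector $v$ and compressing to $\C v$ gives the scalar point $\lambda$, and NC operator balls are closed under such compressions. By (A1) we pick $\theta\in G$ with $\theta(\lambda)=0$ and let $\Theta$ be its NC extension. Then $\Theta(T)\in\Omega(n)$ is again commuting; for the NC extension this follows because $\Theta$ is given on $\Omega(n)$ by a holomorphic functional calculus of $T$. By the spectral mapping theorem, $0\in\sigma(\Theta(T))$. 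Moreover $f(T)=(f\circ\theta^{-1})(\Theta(T))$ by compatibility of the Taylor calculus with composition, and $\|f\circ\theta^{-1}\|_\infty=\|f\|_\infty$. Hence it suffices to treat tuples $T$ with $0\in\sigma(T)$.

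\emph{Step 2 (Gleason splitting).} Assume $0\in\sigma(T)$ and set $c=f(0)$, so that $f-c=\sum_i z_i g_i$ with $\|g_i\|_\infty\le K_\Omega\|f-c\|_\infty\le 2K_\Omega\|f\|_\infty$. Then
\[
  f(T)=c\,I+\sum_{i=1}^d T_i\, g_i(T).
\]
Choose a common eigenvector for $T^*$ at $\bar 0$, that is, a unit vector $w$ with $T_i^*w=0$ for all $i$. Let $M=w^\perp$; it is jointly invariant for the $T_i$ since $\langle T_i x,w\rangle=\langle x,T_i^*w\rangle=0$. With respect to $\C w\oplus M$ each $T_i$ (and each function of $T$) is block upper triangular. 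The compression $T|_M$ is a commuting tuple in $\Omega(n-1)$, again using compression invariance, so by induction $\|g_i(T)|_M\|\le C_{\Omega,n-1}\|g_i\|_\infty$. Also, by Lemma~\ref{lem:matrix-coefficients}, $\|T_i\|\le\sup_{\Omega(1)}|z_i|$.

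\emph{Step 3 (assembling).} Since $T_i$ maps into $M$, we have $T_ig_i(T)=T_i P_M g_i(T)$. The delicate point is that $P_M g_i(T)$ involves the off-diagonal block of $g_i(T)$, not only its compression to $M$. I expect this to be the main obstacle. I would first try arranging the triangularization the other way: take $w$ a common eigenvector of $T$ itself, with $T_iw=0$, so that $\C w$ is invariant, and write $T_i g_i(T)=g_i(T)T_i$. Then $T_i$ kills $w$, so only the restriction of $g_i(T)$ to the range of $T_i$ matters. That range lies in $\C w\oplus M$, though, so this still falls short. The cleaner route is to estimate the $(1,1)$ corner and the $M$-part separately: the $(1,1)$ entry of $f(T)$ equals $f(0)=c$, bounded by $\|f\|_\infty$, and the remaining blocks are controlled by $\sum_i\|T_i\|\,C_{\Omega,n-1}\,2K_\Omega\|f\|_\infty\le 2\rho_\Omega K_\Omega\,C_{\Omega,n-1}\|f\|_\infty$.

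Getting the stated constant $(2\rho_\Omega K_\Omega)^{n-1}$ exactly, rather than something like $1+2\rho_\Omega K_\Omega C_{\Omega,n-1}$, will require absorbing the scalar term. I would do this by applying the splitting to $f-c$ and noting that $\rho_\Omega K_\Omega\ge 1$, since $z_1$ itself must be decomposed. This yields the recursion $C_{\Omega,n}\le 2\rho_\Omega K_\Omega\,C_{\Omega,n-1}$.
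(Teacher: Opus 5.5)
There is a genuine gap: your argument cannot reach the constant $(2\rho_\Omega K_\Omega)^{n-1}$. Splitting $f=c+\sum_i z_ig_i$ with $c=f(0)$ and $\|g_i\|_\infty\le 2K_\Omega\|f\|_\infty$ gives only $\|f(T)\|\le |c|+\sum_i\|T_ig_i(T)\|\le (1+2\rho_\Omega K_\Omega C_{\Omega,n-1})\|f\|_\infty$. The inequality $\rho_\Omega K_\Omega\ge1$ does not remove the additive $1$. For instance, for $\Omega(1)=\D$ one has $\rho_\Omega=K_\Omega=1$, and at $n=2$ your recursion gives $3$ instead of $2$. Iterating $C_n=1+aC_{n-1}$ with $a=2\rho_\Omega K_\Omega$ yields $\sum_{k=0}^{n-1}a^k$, not $a^{n-1}$.

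The missing idea is a M\"obius normalization in the target disc, followed by the one-variable von Neumann inequality. Normalize $\|f\|_\infty\le\epsilon$, let $\psi$ be the disc automorphism with $\psi(f(0))=0$, and set $h=\psi\circ f$. Then $h(0)=0$ and $\|h\|_\infty\le 2\epsilon$, so the Gleason decomposition of $h$ has no constant term and gives $\|h(T)\|\le 2\epsilon\rho_\Omega K_\Omega C_{\Omega,n-1}$. With $\epsilon=(2\rho_\Omega K_\Omega C_{\Omega,n-1})^{-1}$, $h(T)$ is a contraction. Von Neumann's inequality then makes $f(T)=\psi^{-1}(h(T))$ a contraction, and homogeneity gives $C_{\Omega,n}=2\rho_\Omega K_\Omega C_{\Omega,n-1}$. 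In the paper, $\rho_\Omega K_\Omega\ge1$ serves only to guarantee $\epsilon\le 1/2$, so that the normalization happens inside the unit disc; it is not used to absorb a scalar term.

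Second, the estimate $\|T_ig_i(T)\|\le r_i\,C_{\Omega,n-1}\|g_i\|_\infty$, with $r_i=\sup_{\Omega(1)}|z_i|$, is asserted rather than proved. You correctly identify it as the crux, but both of your attempts use the product in the unhelpful order. Your first claim, $T_ig_i(T)=T_iP_Mg_i(T)$ when $T_i^*w=0$, is also false: there $T_i=P_MT_i$, not $T_iP_M$.

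In the configuration $T_iw=0$ for all $i$, keep the order $T_ig_i(T)$:
\begin{itemize}
\item Since $T_i$ kills $w$, you have $T_i=T_iP_M$.
\item Since $\C w$ is invariant under $g_i(T)$ and $M$ is co-invariant, $P_Mg_i(T)=g_i(B)P_M$. Here $B$ is the compression of $T$ to $M$, which is commuting and lies in $\Omega(n-1)$.
\item Hence $T_ig_i(T)=T_i\,g_i(B)P_M$ and $\|T_ig_i(T)\|\le r_i\|g_i(B)\|$. This is exactly the paper's block computation.
\end{itemize}
Dually, with $T_i^*w=0$, write $T_ig_i(T)=g_i(T)T_i=g_i(T|_M)T_i$, using that $T_i$ maps into the invariant subspace $M$.

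With this repaired, your argument does prove a von Neumann type inequality, but only with the weaker constant $\sum_{k=0}^{n-1}(2\rho_\Omega K_\Omega)^k$, not the stated one.
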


In all the examples below every coordinate function has supremum $1$ on the
scalar level, so that $\rho_{\Omega}=d$ and the constant takes the form
$(2dK_{\Omega})^{n-1}$.

We then apply the abstract criterion to NC operator balls associated with
Cartan domains of type~I. Write
\[
  R_{p,q}:=\{Z\in\M_{p,q}(\C):\|Z\|<1\}
\]
for $p,q\in\N$. The domains $R_{1,q}$ and $R_{p,1}$ are, respectively, row
and column balls. We show that products of such factors form NC operator balls
satisfying the assumptions of Theorem~A.

\begin{theoremB}
Let
\[
  \Omega_1
  =
  R_{p_1,q_1}\times\cdots\times R_{p_s,q_s}
  \subset\C^d,
\]
where $d=\sum_{j=1}^s p_jq_j$, and assume that for each $j=1,\ldots,s$ either
$p_j=1$ or $q_j=1$. Let $\cR_{\Omega}$ be the corresponding product NC operator
ball. Then for every commuting $T\in\cR_{\Omega}(n)$ and every
$f\in H^\infty(\Omega_1)$,
\[
  \|f(T)\|\leq C_{\Omega,n}\|f\|_{H^\infty(\Omega_1)}
\]
with $C_{\Omega,n}=(2dK_{\Omega})^{n-1}$, where $K_{\Omega}$ depends only on
$\Omega_1$.
\end{theoremB}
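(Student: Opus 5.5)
The plan is to deduce Theorem~B from Theorem~A by checking that the product NC ball $\cR_{\Omega}$ satisfies the hypotheses of Theorem~A. There are four things to verify: $\cR_{\Omega}$ is an NC operator ball $\D_Q$ of a linear pencil with linearly independent coefficients; (A1) holds; (A2) holds; and $\rho_{\Omega}=d$.

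\emph{Pencil and $\rho_\Omega$.} For each factor take the pencil $Q_j(Z)=\sum_{a,b}E_{ab}z^{(j)}_{ab}$, with $E_{ab}\in\M_{p_j,q_j}(\C)$ the matrix units. Then take $Q=Q_1\oplus\cdots\oplus Q_s$ as a block-diagonal pencil. Its $d=\sum p_jq_j$ coefficients are distinct matrix units placed in distinct blocks, so they are linearly independent. Moreover, $\|Q(Z)\|<1$ holds iff each block has norm $<1$, so $\D_Q(1)=\Omega_1$. Each coordinate $z^{(j)}_{ab}$ is an entry of a contraction, hence has modulus $<1$, and the value $1$ is approached along $tE_{ab}$ in that factor. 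So every supremum equals $1$ and $\rho_{\Omega}=d$.

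\emph{(A1).} Take $G=\Aut(R_{p_1,q_1})\times\cdots\times\Aut(R_{p_s,q_s})$ acting factorwise; it is transitive on $\Omega_1$ because each factor is homogeneous. Each $\Aut(R_{p,q})$ is generated by unitary maps $Z\mapsto UZV$ and Möbius/Potapov maps
\[
\phi_A(Z)=(I-AA^*)^{-1/2}(Z-A)(I-A^*Z)^{-1}(I-A^*A)^{1/2}.
\]
Both are given by NC rational expressions. The plan is to extend them levelwise by replacing $A$ with $A\otimes I_n$ and $U,V$ with $U\otimes I_n,V\otimes I_n$ acting on $Z\in\M_{p,q}(\M_n)$. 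The standard identity $I-\phi_A(Z)^*\phi_A(Z)=(\cdots)^*(I-Z^*Z)(\cdots)$ holds verbatim with operator entries, so this gives an NC automorphism of each factor ball. Acting blockwise then gives an NC automorphism of $\cR_{\Omega}$ lifting $\theta$. (If the paper already has a lemma on NC lifts of Cartan automorphisms, I would cite it instead.)

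\emph{(A2), the main obstacle.} Here the restriction $p_j=1$ or $q_j=1$ is used: each factor is then a Euclidean ball $\Ball_{m_j}$ (the row and column balls coincide at the scalar level), so $\Omega_1$ is a product of balls. I would first try the known Gleason-type results for balls and polydisc-type products with bounded coefficient functions (e.g.\ via Henkin/Kerzman-type $\bar\partial$ solutions with bounds on strictly pseudoconvex factors). If that is not available for products, I would induct on the number of factors. Write $f(z^{(1)},\dots,z^{(s)})=[f(z^{(1)},\dots)-f(0,z^{(2)},\dots)]+f(0,z^{(2)},\dots)$. The second term is handled by induction. The first vanishes on $\{z^{(1)}=0\}$, so the goal is to divide it by the coordinates of $z^{(1)}$ with bounded holomorphic dependence on the parameters $(z^{(2)},\dots)$, using a bounded Gleason decomposition on the ball $\Ball_{m_1}$ whose linear solution operator is parameter-holomorphic. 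The paper presumably states these ball facts earlier, and I would cite them. The constant $K_\Omega$ then depends only on the $m_j$, hence on $\Omega_1$.

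With all hypotheses verified, Theorem~A gives $\|f(T)\|\le (2\rho_\Omega K_\Omega)^{n-1}\|f\|_\infty=(2dK_\Omega)^{n-1}\|f\|_\infty$, which is the claim.
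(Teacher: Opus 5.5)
Your proposal is correct and follows the paper's route. It checks the same hypotheses of Theorem~A as Lemma~\ref{lemma:product-hypotheses}: the block-diagonal pencil, $\rho_{\Omega}=d$, componentwise (A1), and (A2) by splitting off one ball factor as in Proposition~\ref{prop:product-Gleason}. The parameter-holomorphic bounded division you leave unspecified on a ball factor is the straight-segment operator $H_i(z,w)=\int_0^1\frac{\partial h}{\partial z_i}(tz,w)\,\dd t$ of Corollary~\ref{cor:product-balls-Gleason}.

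The one real difference is that you build the NC lifts directly from Harris--Potapov maps with $A\otimes I_n$, where the paper cites McCarthy--Timoney. This is fine here because every factor is a ball. Your general claim that $\Aut(R_{p,q})$ is generated by unitary and M\"obius maps is false for $p=q\geq 2$: there the transpose is an automorphism with no NC lift.
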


For every type~I Cartan domain $R_{p,q}$, the associated NC operator ball
$\cR_{p,q}$ satisfies \textup{(A1)}. The bounded $H^\infty$-Gleason property
\textup{(A2)} holds when $\min(p,q)=1$. For $\min(p,q)\geq 2$, we are not
aware of a proof of this property in the literature. We therefore state the
following result under hypothesis \textup{(A2)}.

\begin{theoremC}
Let $p,q\geq 1$, and suppose $R_{p,q}$ satisfies assumption
\textup{(A2)}. Then, for every commuting $T\in\cR_{p,q}(n)$ and every
$f\in H^\infty(R_{p,q})$,
\[
  \|f(T)\|\leq C_{p,q,n}\|f\|_{H^\infty(R_{p,q})}
\]
with $C_{p,q,n}=(2pqK_{p,q})^{n-1}$, where $K_{p,q}$ depends on $R_{p,q}$.
\end{theoremC}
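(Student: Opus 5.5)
Theorem~C will follow from Theorem~A once we check that $\cR_{p,q}$ satisfies its hypotheses and compute $\rho$; Theorem~B and its product structure are not needed. First, identify $\cR_{p,q}$ as an NC operator ball $\D_Q$. Take the pencil
\[
Q(z)=\sum_{i=1}^{p}\sum_{j=1}^{q} E_{ij}\,z_{ij},
\]
with matrix units $E_{ij}\in\M_{p,q}(\C)$. These coefficients are linearly independent, and $d=pq$. At level $n$, $\cR_{p,q}(n)$ consists of the tuples $X=(X_{ij})$ with $\bigl\|\sum E_{ij}\otimes X_{ij}\bigr\|<1$, that is, $p\times q$ block matrices of norm less than one. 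At level $1$ this is exactly $R_{p,q}$.

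Next, verify (A1). This is asserted just before Theorem~C, and I would prove it as follows. The biholomorphic automorphisms of $R_{p,q}$ are generated by the unitary maps $Z\mapsto UZV$ (with $U\in U(p)$, $V\in U(q)$) together with the Möbius maps
\[
\phi_A(Z)=(I-AA^*)^{-1/2}(Z-A)(I-A^*Z)^{-1}(I-A^*A)^{1/2},\qquad \|A\|<1.
\]
The maps $\phi_A$ already act transitively, since $\phi_A(A)=0$ and $\phi_A^{-1}=\phi_{-A}$. Each of them has an NC lift, obtained by replacing $Z$ with the block matrix $\sum E_{ij}\otimes X_{ij}$ and $A$ with $A\otimes I_n$. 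The standard identity
\[
I-\phi_A(Z)^*\phi_A(Z)=(I-A^*A)^{1/2}(I-Z^*A)^{-1}(I-Z^*Z)(I-A^*Z)^{-1}(I-A^*A)^{1/2}
\]
holds verbatim for operator arguments, so the lift maps $\cR_{p,q}(n)$ into itself. The lift respects direct sums and similarities, so it is an NC map, and it is bijective with inverse the lift of $\phi_{-A}$. Hence $G=\{\phi_A\}$, or the group it generates, satisfies (A1).

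Now (A2) holds by hypothesis with constant $K_{p,q}$. It remains to compute $\rho$. Each entry satisfies $|z_{ij}|\le\|Z\|<1$, and $z=tE_{ij}$ with $t\uparrow 1$ shows the supremum equals $1$. So $\rho_{R_{p,q}}=pq$, and Theorem~A gives $C_{p,q,n}=(2pqK_{p,q})^{n-1}$, with $C=1$ for $n=1$, matching the stated formula.

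The main obstacle is making the NC-lift step rigorous. Two points need checking. First, $I-(A^*\otimes I_n)\mathbf Z$ must be invertible; this holds because $\|A^*\otimes I_n\|\,\|\mathbf Z\|<1$. Second, the lifted map must genuinely be an NC function on the graded domain, with matrix coefficients acting correctly on the tensor factor. I would handle this by writing $\phi_A$ as a convergent NC power series in the $X_{ij}$, obtained from the Neumann series of $(I-A^*\mathbf Z)^{-1}$. Graded-ness and respect for direct sums and similarities then follow termwise.
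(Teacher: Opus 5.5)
Your proposal is correct and follows the paper's route: Theorem~C is Theorem~A applied to $\cR_{p,q}=\D_Q$ with $d=\rho=pq$, with (A2) assumed and (A1) verified. The one difference is that you construct the NC lifts of the Möbius maps $\phi_A$ explicitly, via the operator form of the identity for $I-\phi_A(Z)^*\phi_A(Z)$ and via $\phi_A^{-1}=\phi_{-A}$; at level $n$ your lift is just $\phi_{A\otimes I_n}$ on $R_{pn,qn}$. The paper instead cites McCarthy--Timoney for these lifts.

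One harmless slip: for $p=q\geq 2$ the scalar automorphism group is not generated by the maps $Z\mapsto UZV$ and $\phi_A$, since the transpose is also an automorphism (one without an NC lift, as the paper notes). Your argument only uses the transitive family $\{\phi_A\}$ and the group it generates, so nothing is affected.
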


This paper is organized as follows. Section~\ref{sec:prelim} recalls the
bounded $H^\infty$-Gleason property, standard NC-set terminology, NC operator
balls, and commuting loci.
Section~\ref{sec:recipe} states and proves the abstract sufficient criterion,
Theorem~A, and in Subsection~\ref{subsec:fd-representations} uses it to
describe, up to similarity, all bounded finite-dimensional representations of
the scalar uniform algebra on~$\Omega(1)$.
Section~\ref{sec:gleason-product} contains the product Gleason
proposition. Section~\ref{sec:row-column} applies the criterion to products of
row and column balls and proves Theorem~B. Section~\ref{sec:cartan} records the
conditional consequence for general Cartan domains.

\section{Preliminaries}\label{sec:prelim}

\subsection{The bounded \texorpdfstring{$H^\infty$}{H-infinity}-Gleason property}

\begin{definition}\label{def:Hinfty-Gleason}
Let $\Omega\subset\C^m$ be open and $z_0\in\Omega$. We say that $\Omega$ has
the \emph{bounded $H^\infty$-Gleason property at $z_0$} if there exists
$K_{\Omega}>0$ such that whenever $f\in H^\infty(\Omega)$ vanishes at $z_0$, there are
$g_1,\ldots,g_m\in H^\infty(\Omega)$ with
\[
  f(z)=\sum_{i=1}^m \bigl(z_i-(z_0)_i\bigr)g_i(z),
  \qquad
  \|g_i\|_\infty\leq K_{\Omega}\,\|f\|_\infty.
\]
\end{definition}

Many bounded domains enjoy this property. In particular, if $\Omega$ is a
bounded linearly convex domain with $C^{1+\varepsilon}$ boundary, then
$\Omega$ has the bounded $H^\infty$-Gleason property at every point; see
Lemmers--Wiegerinck
\cite[Theorem~4 and its proof in the arXiv version]{lemmers2002solving}.
On convex domains with this boundary regularity, their division operators
are given by the straight-segment formula
\begin{equation}\label{eq:integral-division}
  g_i(z)=\int_0^1 \frac{\partial f}{\partial z_i}\bigl(z_0+t(z-z_0)\bigr)\,\dd t,
  \qquad i=1,\ldots,m
\end{equation}
The open unit ball in $\C^m$ is a basic example. On a general linearly convex
domain, the construction instead uses suitable paths in complex-line
sections and extension from polynomials; the segments in
\eqref{eq:integral-division} need not lie in the domain.
The boundary regularity is used to obtain bounded division coefficients;
even on convex domains with merely $C^1$ boundary, the coefficients defined
by \eqref{eq:integral-division} can be unbounded, as shown by Grang\'e's
example discussed in \cite[Section~6 of the arXiv version]{lemmers2002solving}.

\subsection{NC sets, operator balls, and commuting loci}

Let
\[
  \M^d:=\bigsqcup_{n\geq 1}\M_n(\C)^d
\]
denote the graded universe of $d$-tuples of matrices. If
$X\in\M_n(\C)^d$ and $Y\in\M_m(\C)^d$, then
$X\oplus Y\in\M_{n+m}(\C)^d$ denotes the coordinatewise direct sum. If
$S\in\M_n(\C)$ is invertible, write
$S^{-1}XS=(S^{-1}X_1S,\ldots,S^{-1}X_dS)$.

\begin{definition}\label{def:nc-set-map}
An \emph{NC set} is a graded subset $\Omega=\bigsqcup_{n\geq 1}\Omega(n)$ of
$\M^d$ that is closed under direct sums. An \emph{NC domain} is an NC set whose
levels are open. A map $\Phi:\Omega\to\Lambda$ between NC sets is an
\emph{NC map} if it acts levelwise and satisfies
\[
  \Phi(X\oplus Y)=\Phi(X)\oplus\Phi(Y)
\]
and
\[
  \Phi(S^{-1}XS)=S^{-1}\Phi(X)S
\]
whenever the tuples involved belong to the corresponding levels. An
\emph{NC automorphism} is a bijective NC map whose inverse is also an NC map.
\end{definition}

This is the standard notion of NC map in free function theory;
see, for example, \cite{kaliuzhnyi2014foundations,sampatshalit2025classification,sampatshalit2025weakstar}.

\begin{definition}\label{def:DQ}
A \emph{linear pencil} in $d$ variables is a linear matrix-valued map
\[
  Q(z)=A_1z_1+\cdots+A_dz_d,
  \qquad A_1,\ldots,A_d\in\M_{r,c}(\C).
\]
For the NC operator balls considered in this paper, we assume that
$A_1,\ldots,A_d$ are linearly independent. For a tuple $X\in\M_n(\C)^d$ we set
\[
  Q(X):=\sum_{i=1}^d A_i\otimes X_i,
\]
an $rn\times cn$ block matrix. The associated \emph{NC operator ball} is the
graded set
\[
  \D_Q(n):=\{X\in\M_n(\C)^d:\|Q(X)\|<1\},\qquad n\geq 1.
\]
\end{definition}

All the domains considered in this paper arise this way, and linearity of $Q$
is used throughout; see the discussion in the introduction.

\begin{remark}\label{rem:pencil-basics}
Write $E_Q:=\operatorname{span}\{A_1,\ldots,A_d\}\subset\M_{r,c}(\C)$, equipped
with the operator space structure inherited from $\M_{r,c}(\C)$. The linear
independence of the coefficients makes $X\mapsto Q(X)$ injective at every
matrix level: if $Q(X)=0$, then
$\sum_i A_i(X_i)_{jk}=0$ for every matrix entry $(j,k)$, so all $X_i$ vanish.
Thus $X\mapsto\|Q(X)\|$ is a norm on $\M_n(\C)^d$ for every $n$, and $Q$
identifies $\D_Q(n)$ with the open unit ball of $\M_n(E_Q)$. In particular,
each level is a bounded, open, convex, balanced neighbourhood of the origin.
Moreover $Q(X\oplus Y)$ is unitarily
equivalent to $Q(X)\oplus Q(Y)$, whence
\[
  \|Q(X\oplus Y)\|=\max\{\|Q(X)\|,\|Q(Y)\|\},
\]
so $\D_Q$ is closed under direct sums and is an NC domain in the sense of
Definition~\ref{def:nc-set-map}.

The independence convention is equivalent to boundedness of the scalar
level: if the coefficients were dependent, there would be a nonzero $z$ with
$Q(z)=0$, and $\D_Q(1)$ would contain the entire line $\C z$.
\end{remark}

\begin{definition}\label{def:commuting-locus}
For a graded set $\Omega\subset\M^d$, its \emph{commuting locus} is
\[
  \Comm(\Omega)(n)
  :=
  \{X=(X_1,\ldots,X_d)\in\Omega(n):X_iX_j=X_jX_i\ \text{for all }i,j\}.
\]
\end{definition}

\begin{remark}\label{rem:classical-not-nc-Hinfty}
For an NC operator ball $\Omega=\D_Q$, $H^\infty(\Omega(1))$ denotes the
classical Banach algebra of bounded holomorphic functions on the scalar level,
not the NC function algebra
$H^\infty(\D_Q)$. We evaluate such classical functions only on commuting tuples,
using the Taylor functional calculus when the joint spectrum is contained in
the scalar domain \cite{muller2026taylor}.
\end{remark}

\begin{lemma}\label{lem:nc-map-holomorphic-fc}
Let $\Omega=\D_Q$ be an NC operator ball, and let $\Phi:\Omega\to\M^d$ be a
locally bounded NC map. Then the scalar-level restriction
$\varphi:=\Phi\big|_{\Omega(1)}$ is holomorphic on $\Omega(1)$, and for every
commuting tuple $T\in\Omega(n)$,
\[
  \Phi_n(T)=\varphi(T),
\]
where the right-hand side is given by holomorphic functional calculus. In
particular, $\Phi$ sends commuting tuples to commuting tuples.
\end{lemma}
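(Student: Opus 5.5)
The argument rests on the standard power-series expansion of locally bounded NC maps, used together with the convexity and balancedness of the levels of $\D_Q$ (Remark~\ref{rem:pencil-basics}). We will not need anything specific about $Q$ beyond the fact that each $\Omega(n)$ is an open, balanced, convex neighbourhood of $0$.

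\textbf{Step 1: holomorphy.} By the general theory of NC functions \cite{kaliuzhnyi2014foundations}, a locally bounded NC map on an NC domain is holomorphic (even analytic) at every level. Applied at level $1$, this shows that $\varphi$ is holomorphic on $\Omega(1)$.

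\textbf{Step 2: power series near the origin.} Since $\Omega$ is balanced about $0$, the Taylor--Taylor expansion of $\Phi$ at the origin,
\[
\Phi_n(X)=\sum_{k\ge0}\sum_{|w|=k}c_w X^w ,
\]
converges on a uniform neighbourhood of $0$, for instance on $r\Omega$ for small $r>0$. Here the coefficients $c_w\in\C^d$ are independent of $n$, and we use that $\Phi(0_n)=\Phi(0_1)\otimes I_n$ by direct-sum compatibility. In fact, because the levels are balanced, homogeneous expansion along $t\mapsto\Phi(tX)$ shows that the series $\sum_k\sum_{|w|=k}c_wX^w$ converges (grouped by total degree $k$) on all of $\Omega(n)$. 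Concretely, $\Phi(tX)$ is holomorphic for $|t|<1/\|Q(X)\|$, and its Taylor coefficients in $t$ are the NC homogeneous polynomials $P_k(X)=\sum_{|w|=k}c_wX^w$.

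\textbf{Step 3: commuting tuples.} For commuting $T$, each word $X^w$ depends only on its commutative multi-index, so $P_k(T)=p_k(T)$, where $p_k=P_k|_{\Omega(1)}$ is exactly the degree-$k$ homogeneous part of the Taylor series of $\varphi$ at $0$. The joint (Taylor) spectrum of a commuting $T\in\Omega(n)$ consists of joint eigenvalues $\lambda$. Compressing to a common eigenvector via the similarity and direct-sum invariance of the balls, or simply by using the upper-triangular form of $T$, we get $\|Q(\lambda)\|\le\|Q(T)\|<1$, so $\sigma(T)\subset\Omega(1)$ and $\varphi(T)$ is defined. Choose $s>1$ with $sT\in\Omega(n)$. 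Then $\sigma(sT)\subset\Omega(1)$, so $\sigma(T)$ lies in the compact convex balanced set $s^{-1}\overline{\Omega(1)}$. The homogeneous expansion $\sum_kp_k$ converges uniformly on a neighbourhood of that set, since $\varphi$ is holomorphic on the balanced domain $\Omega(1)$. Continuity of the holomorphic functional calculus under locally uniform convergence then gives
\[
\varphi(T)=\sum_kp_k(T)=\sum_kP_k(T)=\Phi_n(T).
\]

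\textbf{Step 4: commutativity.} Since $\varphi(T)$ lies in the commutative algebra generated by $T$ under the functional calculus, its coordinates commute.

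The main obstacle is Step 3: justifying that the Taylor series of $\varphi$ at $0$ converges on a neighbourhood of $\sigma(T)$, so that the functional calculus commutes with the summation. I would first handle this using balancedness together with a Cauchy estimate on the slice $t\mapsto\varphi(t\lambda)$, which gives uniform convergence on $s^{-1}\overline{\Omega(1)}$. Alternatively, one can avoid the issue by a matrix-level argument: block-upper-triangularize $T$ by a unitary and use the NC property of $\Phi$ along $tT$ for $|t|<s$.
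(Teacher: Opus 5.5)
Your proposal is correct and follows essentially the paper's route: expand $\Phi$ in its free power series at the origin, observe that on commuting tuples the words abelianize to the Taylor series of $\varphi$, and identify the evaluation at $T$ with the holomorphic functional calculus of $\varphi$. The differences are minor. The paper cites uniform convergence on smaller concentric balls from the general theory of NC functions, whereas you justify convergence directly: you use the degree-grouped series along the slice $t\mapsto\Phi_n(tT)$ on a disc of radius $1/\|Q(T)\|>1$, and Cauchy estimates for the scalar homogeneous expansion near $\sigma(T)$. Also, $\D_Q$ is only unitarily invariant, not similarity invariant, so it is your triangularization (compression) argument, not the similarity one, that gives $\sigma(T)\subset\Omega(1)$.
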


\begin{proof}
Locally bounded NC functions on NC operator balls admit a free power series
expansion at the origin which converges uniformly on every strictly smaller
concentric ball \cite{kaliuzhnyi2014foundations}. A commuting tuple
$T\in\Omega(n)$ satisfies $\|Q(T)\|<1$, hence lies in some smaller concentric
ball, where the series may be evaluated. Restricting the series to commuting
variables abelianizes the words, and the resulting commutative series is the
Taylor series of $\varphi$. Evaluating that series on $T$ is the holomorphic
functional calculus of $\varphi$. Since the coordinates of $\varphi(T)$
belong to the commutative algebra supplied by the holomorphic functional
calculus of $T$, they commute. Thus $\Phi$ sends commuting tuples to commuting
tuples.
\end{proof}

The following lemma shows that the scalar coordinate bounds hold uniformly
at every matrix level.

\begin{lemma}\label{lem:matrix-coefficients}
Let $\Omega=\D_Q$ be an NC operator ball, let $T\in\Omega(n)$, and let
$\xi,\eta\in\C^n$ be unit vectors. Then
\[
  \bigl(\langle T_1\xi,\eta\rangle,\ldots,\langle T_d\xi,\eta\rangle\bigr)
  \in\Omega(1).
\]
Consequently, setting
\[
  r_i:=\sup_{z\in\Omega(1)}|z_i|<\infty,
  \qquad
  \rho_{\Omega}:=\sum_{i=1}^d r_i,
\]
we have $\|T_i\|\leq r_i$ for every $n\in\N$, every $T\in\Omega(n)$ and every
$i=1,\ldots,d$.
\end{lemma}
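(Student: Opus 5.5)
The plan is to compress the matrix tuple $T$ to a scalar point using the vectors $\xi,\eta$, and to check that the compression preserves the pencil norm bound. Set $z_i:=\langle T_i\xi,\eta\rangle$ and $z=(z_1,\ldots,z_d)$. By Remark~\ref{rem:pencil-basics}, membership $z\in\Omega(1)$ means $\|Q(z)\|=\|\sum_i z_iA_i\|<1$. So it suffices to exhibit $Q(z)$ as a compression of $Q(T)=\sum_i A_i\otimes T_i$ by two contractions.

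For the compression, define the isometries $V_\xi:\C^c\to\C^c\otimes\C^n$, $v\mapsto v\otimes\xi$, and $V_\eta:\C^r\to\C^r\otimes\C^n$, $u\mapsto u\otimes\eta$. For $u\in\C^r$ and $v\in\C^c$ we have
\[
  \langle (A_i\otimes T_i)(v\otimes\xi),u\otimes\eta\rangle
  =\langle A_iv,u\rangle\,\langle T_i\xi,\eta\rangle ,
\]
and therefore $V_\eta^*\,Q(T)\,V_\xi=\sum_i z_iA_i=Q(z)$. Since $V_\xi$ and $V_\eta$ are isometries, $\|Q(z)\|\le\|Q(T)\|<1$, and hence $z\in\Omega(1)$. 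The only point requiring care is the ordering of the tensor factors, which has to match the block convention $Q(X)=\sum A_i\otimes X_i$ used in Definition~\ref{def:DQ}. If the other ordering is used, one conjugates by the canonical shuffle unitary, which does not change the norm.

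For the consequence, first note that $r_i<\infty$ because $\Omega(1)$ is bounded by Remark~\ref{rem:pencil-basics}. Fix $i$. For arbitrary unit vectors $\xi,\eta$, the first part gives $z\in\Omega(1)$, so $|\langle T_i\xi,\eta\rangle|=|z_i|\le r_i$. Taking the supremum over unit vectors $\xi,\eta$ yields $\|T_i\|\le r_i$, and this holds uniformly in $n$.

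I do not expect a genuine obstacle here. The only step needing attention is the bookkeeping in the compression identity.
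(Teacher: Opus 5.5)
Your proposal is correct and matches the paper's proof: your isometries $v\mapsto v\otimes\xi$ and $u\mapsto u\otimes\eta$ are exactly the paper's $I_c\otimes V_\xi$ and $I_r\otimes V_\eta$. They realize $Q(z)$ as a norm-nonincreasing compression of $Q(T)$, and the bound $\|T_i\|\leq r_i$ then follows by taking the supremum of $|\langle T_i\xi,\eta\rangle|$ over unit vectors, as in the paper.
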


\begin{proof}
Write $z_i:=\langle T_i\xi,\eta\rangle$ and let $V_\xi:\C\to\C^n$,
$V_\eta:\C\to\C^n$ be the isometries $1\mapsto\xi$ and $1\mapsto\eta$. Viewing
$Q(T)=\sum_i A_i\otimes T_i$ as an operator from $\C^c\otimes\C^n$ to
$\C^r\otimes\C^n$, we have
\[
  (I_r\otimes V_\eta)^*\,Q(T)\,(I_c\otimes V_\xi)
  =\sum_{i=1}^d A_i\otimes\langle T_i\xi,\eta\rangle
  =Q(z),
\]
because $V_\eta^*T_iV_\xi=\langle T_i\xi,\eta\rangle$. Compressing by
isometries does not increase the norm, so $\|Q(z)\|\leq\|Q(T)\|<1$ and
$z\in\D_Q(1)=\Omega(1)$.

For the second assertion, fix $i$. For all unit vectors $\xi,\eta$ the first
part gives $|\langle T_i\xi,\eta\rangle|\leq r_i$, and taking the supremum over
$\xi$ and $\eta$ yields $\|T_i\|\leq r_i$.
\end{proof}

\begin{remark}\label{rem:linearity-needed}
Linearity is essential here. For a general matrix of NC polynomials $Q$ the
first assertion of Lemma~\ref{lem:matrix-coefficients} need not hold, since
evaluation of a monomial does not commute with compression:
$\langle T_1T_2\xi,\eta\rangle$ is unrelated to
$\langle T_1\xi,\eta\rangle\langle T_2\xi,\eta\rangle$. Compressing a single
vector ($\xi=\eta$) would only bound the numerical radius of $T_i$, and hence
$\|T_i\|$ up to a factor of~$2$; using two vectors gives the sharp bound.
\end{remark}

\begin{proposition}[Compression stability and spectra]
\label{prop:automatic-DQ-structure}
Let $\Omega=\D_Q$ be an NC operator ball.
\begin{enumerate}
\item If $T\in\Omega(n)$ and $V:\C^m\to\C^n$ is an isometry, then
$V^*TV:=(V^*T_1V,\ldots,V^*T_dV)$ belongs to $\Omega(m)$. In particular, if
with respect to a decomposition
$\C^n=\C^k\oplus\C^{n-k}$ every coordinate has block form
\[
  T_i=
  \begin{bmatrix}
    * & *\\
    0 & B_i
  \end{bmatrix},
  \qquad i=1,\ldots,d,
\]
then the lower-right corner $B=(B_1,\ldots,B_d)$ belongs to
$\Omega(n-k)$, and $B$ is commuting whenever $T$ is.

\item If $T\in\Omega(n)$ is a commuting tuple, then its Taylor joint spectrum
satisfies
\[
  \sigma(T)\subset \Omega(1).
\]
\end{enumerate}
\end{proposition}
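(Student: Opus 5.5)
Part 1 follows the compression argument of Lemma~\ref{lem:matrix-coefficients}. For an isometry $V:\C^m\to\C^n$ we have
\[
  (I_r\otimes V)^*\,Q(T)\,(I_c\otimes V)=\sum_{i=1}^d A_i\otimes V^*T_iV=Q(V^*TV),
\]
so $\|Q(V^*TV)\|\leq\|Q(T)\|<1$, and hence $V^*TV\in\Omega(m)$.

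For the block statement, take $V$ to be the inclusion of the second summand $\C^{n-k}$. Then $V^*T_iV=B_i$, so $B\in\Omega(n-k)$. Since each $T_i$ is block upper triangular, the product $T_iT_j$ has lower-right corner $B_iB_j$. Comparing the corners of $T_iT_j=T_jT_i$ shows that $B$ is commuting whenever $T$ is.

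For part 2, I would use that on a finite-dimensional space the Taylor spectrum of a commuting tuple is the set of joint eigenvalues of a simultaneous upper-triangularization. Concretely, a commuting tuple of matrices can be simultaneously triangularized by a unitary $U$. After conjugating by $U$, each $T_i$ is upper triangular, and the Taylor joint spectrum is
\[
  \sigma(T)=\{(\lambda_1^{(k)},\ldots,\lambda_d^{(k)}):k=1,\ldots,n\},
\]
the set of diagonal tuples. Conjugation by a unitary preserves membership in $\Omega(n)$, because $Q(U^*TU)=(I_r\otimes U)^*Q(T)(I_c\otimes U)$ and $I_r\otimes U$, $I_c\otimes U$ are unitary. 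Each diagonal point $\lambda^{(k)}$ equals $(\langle T_ie_k,e_k\rangle)_i$ for the unit vector $e_k$, so Lemma~\ref{lem:matrix-coefficients} with $\xi=\eta=e_k$ gives $\lambda^{(k)}\in\Omega(1)$.

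A more intrinsic route avoids coordinates. Every point of $\sigma(T)$ is a joint eigenvalue: there is a unit $\xi$ with $T_i^*\xi=\bar\lambda_i\xi$ for all $i$, obtained as a common eigenvector of the commuting adjoints. Then $\langle T_i\xi,\xi\rangle=\lambda_i$, and Lemma~\ref{lem:matrix-coefficients} applies directly.

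The only step that needs care is identifying the Taylor spectrum in finite dimensions with the joint eigenvalues. This is standard: the Koszul complex is exact at every point that is not a joint eigenvalue, and the joint eigenvalues of $T$ and of $T^*$ coincide up to conjugation. I would cite \cite{muller2026taylor} for this fact rather than reprove it. Everything else is the compression inequality already used in Lemma~\ref{lem:matrix-coefficients}.
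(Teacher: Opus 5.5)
Your proposal is correct and follows the paper's proof essentially verbatim. Part (1) is the compression identity $(I_r\otimes V)^*Q(T)(I_c\otimes V)=Q(V^*TV)$ plus blockwise multiplication, and part (2) is unitary simultaneous triangularization followed by Lemma~\ref{lem:matrix-coefficients} with $\xi=\eta=e_k$; your joint-eigenvector variant is only a harmless reformulation, and a joint eigenvector of $T$ itself would serve equally well as one of $T^*$.
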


\begin{proof}
For (1), linearity gives
\[
  (I_r\otimes V)^*\,Q(T)\,(I_c\otimes V)
  =\sum_{i=1}^d A_i\otimes V^*T_iV
  =Q(V^*TV),
\]
so $\|Q(V^*TV)\|\leq\|Q(T)\|<1$ and $V^*TV\in\Omega(m)$. If $V$ is the
inclusion of the second summand of $\C^n=\C^k\oplus\C^{n-k}$ and the
coordinates are block upper triangular as displayed, then $V^*T_iV=B_i$, which
gives the corner statement. Block upper triangular matrices multiply
blockwise, so $B_iB_j=B_jB_i$ whenever $T_iT_j=T_jT_i$.

For (2), simultaneously upper triangularize the commuting matrices
$T_1,\ldots,T_d$ \cite[Theorem~2.3.3]{horn2013matrix}. Each Taylor joint
spectral point $\lambda$ is one of the joint diagonal tuples, that is,
$\lambda_i=\langle T_ie_k,e_k\rangle$ for one of the common basis vectors
$e_k$. By Lemma~\ref{lem:matrix-coefficients} this tuple lies in $\Omega(1)$,
so $\sigma(T)\subset\Omega(1)$. The same spectral
containment also follows from \cite[Theorem~3.2]{scherer2026spectral}, which
identifies the joint spectral radius of a commuting tuple, relative to the
operator space determined by~$Q$, with the maximum of the underlying norm on
the joint spectrum.
\end{proof}

\begin{remark}\label{rem:spectral-not-enough}
The hereditary-spectrum property in
Proposition~\ref{prop:automatic-DQ-structure} does not control off-diagonal
nilpotent parts of triangular matrices. For example, with $\Omega(1)=\D$ and
\[
  T=\begin{bmatrix}0 & t \\ 0 & 0\end{bmatrix},
\]
one has $\sigma(T)=\{0\}\subset\D$ for every $t\in\C$, yet
$\|f(T)\|$ can be made arbitrarily large for $\|f\|_\infty\leq 1$ by choosing
$|t|$ large. This example is compatible with spectral containment, but it
cannot occur in an NC operator ball: such a ball
forces a uniform bound on $\|T_i\|$ at all matrix sizes, by
Lemma~\ref{lem:matrix-coefficients}. The coordinate-norm control
used in the triangular estimates below therefore needs no separate hypothesis.
\end{remark}

\subsection{Cartan domains of type I}

\begin{definition}\label{def:cartan-nc-ball}
For $p,q\geq 1$, the Cartan domain of type~I is
\[
  R_{p,q}:=\bigl\{Z\in\M_{p,q}(\C):\|Z\|<1\bigr\}\subset\C^{pq}.
\]
The associated NC operator ball is the graded set
$\cR_{p,q}=\bigsqcup_{n\geq 1}\cR_{p,q}(n)$ defined as follows. For each $n$,
write a tuple $X=(X_{ij})_{1\leq i\leq p,\,1\leq j\leq q}$ with
$X_{ij}\in\M_n(\C)$ and define
\[
  \cR_{p,q}(n):=
  \Bigl\{
    X \in \M_n(\C)^{pq}
    \;\Big|\;
    \bigl\|\bigl(X_{ij}\bigr)_{i,j}\bigr\|_{\M_{pn,qn}}<1
  \Bigr\},
\]
where the norm is the operator norm of the block matrix with $n\times n$ blocks.
Equivalently, $\cR_{p,q}=\D_Q$ for the linear pencil
\[
  Q(X)=\sum_{i=1}^p\sum_{j=1}^q E_{ij}\otimes X_{ij},
  \qquad E_{ij}:=e_ie_j^{\,*}\in\M_{p,q}(\C),
\]
that is, $Q(X)=(X_{ij})_{i,j}$ viewed as a $p\times q$ block operator matrix.
The coefficients $E_{ij}$ are linearly independent, so the scalar level
$\cR_{p,q}(1)=R_{p,q}$ is bounded.
\end{definition}

\begin{proposition}\label{prop:cartan-nc-automorphisms}
Every coordinate function on $R_{p,q}$ has supremum $1$, so that
$\rho_{\cR_{p,q}}=pq$. Moreover, there is a subgroup
$G_{p,q}\subseteq\Aut(R_{p,q})$ acting transitively on $R_{p,q}$ whose members
admit NC lifts to automorphisms of $\cR_{p,q}$.
\end{proposition}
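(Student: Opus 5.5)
The plan has two parts: a direct computation for the coordinate suprema, and the classical Möbius (Potapov) automorphisms of $R_{p,q}$ for transitivity and NC lifts.

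\textbf{Coordinate suprema.} For $Z\in R_{p,q}$ we have $|Z_{ij}|=|e_i^*Ze_j|\leq\|Z\|<1$. For the reverse inequality take $Z=tE_{ij}$ with $0<t<1$; this lies in $R_{p,q}$ and has $Z_{ij}=t$. Hence each of the $pq$ coordinate functions has supremum $1$, and $\rho_{\cR_{p,q}}=pq$.

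\textbf{The group.} For $A\in R_{p,q}$ I would use the Möbius map
\[
  \varphi_A(Z)=(I_p-AA^*)^{-1/2}(Z-A)(I_q-A^*Z)^{-1}(I_q-A^*A)^{1/2}.
\]
I would let $G_{p,q}$ be the group generated by the $\varphi_A$ and the unitary maps $Z\mapsto UZV$ with $U\in\mathcal U(p)$, $V\in\mathcal U(q)$. Classically each $\varphi_A$ is an automorphism of $R_{p,q}$ with $\varphi_A(A)=0$. Transitivity follows from $\varphi_B^{-1}\circ\varphi_A(A)=B$. In fact $\varphi_A^{-1}=\varphi_{-A}$, so it is enough to lift the generators.

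\textbf{NC lifts.} The unitary maps lift as
\[
  X\mapsto(U\otimes I_n)\,X\,(V\otimes I_n),
\]
where $X$ is viewed as the block matrix $(X_{ij})\in\M_{pn,qn}$. This is norm preserving, and it is NC because it commutes with $I\otimes S$ and with direct sums. For $\varphi_A$ I would set $\mathbf A=A\otimes I_n$ and define
\[
  \Phi_A(X)=(I-\mathbf A\mathbf A^*)^{-1/2}(X-\mathbf A)(I-\mathbf A^*X)^{-1}(I-\mathbf A^*\mathbf A)^{1/2}.
\]
This is the same formula evaluated in the operator algebra $\M_{pn,qn}$. The inverse exists because $\|\mathbf A^*X\|<1$. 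The factors $(I-\mathbf A\mathbf A^*)^{\pm1/2}=(I-AA^*)^{\pm1/2}\otimes I_n$ are scalar matrices tensored with $I_n$.

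Two checks remain for $\Phi_A$:
\begin{itemize}
\item \textbf{It preserves the open unit ball of $\M_{pn,qn}$.} This is the classical operator-theoretic identity
\[
  I-\Phi_A(X)^*\Phi_A(X)=(I-\mathbf A^*\mathbf A)^{1/2}(I-X^*\mathbf A)^{-1}(I-X^*X)(I-\mathbf A^*X)^{-1}(I-\mathbf A^*\mathbf A)^{1/2}.
\]
It is a purely algebraic computation valid for operators; its only inputs are $(I-\mathbf A\mathbf A^*)^{-1/2}\mathbf A=\mathbf A(I-\mathbf A^*\mathbf A)^{-1/2}$ and the same relation with $\mathbf A$ and $\mathbf A^*$ swapped. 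The identity shows $\|\Phi_A(X)\|<1$ whenever $\|X\|<1$. Combined with $\Phi_A^{-1}=\Phi_{-A}$, this makes $\Phi_A$ a bijection of $\cR_{p,q}(n)$.
\item \textbf{It is an NC map with scalar restriction $\varphi_A$.} Every ingredient commutes with conjugation by $I_p\otimes S$ or $I_q\otimes S$ and respects direct sums. Concretely,
\[
  (I_p\otimes S^{-1})\,X\,(I_q\otimes S)=Q(S^{-1}XS),
\]
the matrices $\mathbf A$ commute with $I\otimes S$, and inversion is similarity-equivariant. At $n=1$ the formula reduces to $\varphi_A$.
\end{itemize}

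\textbf{Main obstacle.} I expect the delicate step to be the norm identity with the square-root factors. The cleanest route is to quote Harris's result that these Möbius maps are biholomorphic automorphisms of the open unit ball of any $J^*$-algebra, here $B(\C^{qn},\C^{pn})$. The alternative is to verify the identity directly using the intertwining relations above, which is routine but needs care with the order of the factors.
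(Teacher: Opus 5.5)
Your proof is correct. The coordinate-supremum part is the same as the paper's. For the automorphisms you take a more self-contained route.

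The paper does not construct lifts. It cites McCarthy--Timoney for the existence, for each $a\in R_{p,q}$, of a scalar automorphism $h_a$ with $h_a(0)=a$ admitting a holomorphic NC lift. It then takes $G_{p,q}$ to be the group these generate.

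You instead write the lift down explicitly. It is the Harris--Potapov map $\varphi_A$, evaluated in $\M_{pn,qn}$ with $A$ replaced by its ampliation $A\otimes I_n$. Your three checks all hold:
\begin{itemize}
\item Ball preservation comes from $I-\Phi_A(X)^*\Phi_A(X)=C^*(I-X^*X)C$ with $C=(I-\mathbf A^*X)^{-1}(I-\mathbf A^*\mathbf A)^{1/2}$ invertible. This identity does check out: after squaring, the intertwining relations you list cancel the cross terms.
\item Bijectivity follows from $\Phi_A^{-1}=\Phi_{-A}$.
\item The NC property holds because $A\otimes I_n$ passes through $I\otimes S$. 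In your similarity identity the left-hand $X$ should be read as the block matrix $Q(X)$.
\end{itemize}

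What your route buys is explicit formulas. These make it evident that the lifts are bounded holomorphic maps, which is exactly what Lemma~\ref{lem:nc-map-holomorphic-fc} needs. Your argument is essentially the content of the cited McCarthy--Timoney result, made explicit using only Harris's operator-level theorem.

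The paper's citation is shorter. It also sits in a framework that explains which scalar automorphisms fail to lift, such as transposition when $p=q>1$.

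The unitary generators $Z\mapsto UZV$ are harmless but unnecessary. The $\varphi_A$ alone already act transitively, since $\varphi_{-B}\circ\varphi_A$ sends $A$ to $B$.
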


\begin{proof}
A matrix whose only nonzero entry is $z$ in position $(i,j)$ has norm $|z|$, so
it lies in $R_{p,q}$ exactly when $|z|<1$; conversely, each entry of a strict
contraction has modulus at most $1$. Hence
$\sup_{Z\in R_{p,q}}|Z_{ij}|=1$ for all $i,j$, and summing over the $pq$
coordinates gives $\rho_{\cR_{p,q}}=pq$. McCarthy and Timoney
\cite{mccarthy2016nc-automorphisms} show that, for every $a\in R_{p,q}$,
there is a scalar automorphism $h_a$ with $h_a(0)=a$ admitting a holomorphic
NC automorphism lift. Let $G_{p,q}$ be the subgroup generated by these
automorphisms. It acts transitively, and its members lift by composing the
NC lifts and their inverses. The lifts are holomorphic NC maps, so
Lemma~\ref{lem:nc-map-holomorphic-fc} applies. When $p=q>1$, transpose is a
scalar automorphism without an NC lift; the liftable subgroup need not be
the full automorphism group.
\end{proof}

\begin{proposition}\label{prop:row-column-Gleason}
The row and column balls $R_{1,q}$ and $R_{p,1}$ have the bounded
$H^\infty$-Gleason property at~$0$, with the integral division formula
\eqref{eq:integral-division}.
\end{proposition}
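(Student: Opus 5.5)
Since the row ball $R_{1,q}$ and the column ball $R_{q,1}$ (with $p=q$ renamed) both identify, as subsets of $\C^q$, with the open Euclidean unit ball $\Ball_q=\{z\in\C^q:\sum_i|z_i|^2<1\}$ (the operator norm of a single row or column equals its Euclidean norm), it suffices to prove the bounded $H^\infty$-Gleason property at $0$ for $\Ball_q$, with the division given by \eqref{eq:integral-division}. One route is simply to invoke Lemmers--Wiegerinck: $\Ball_q$ is bounded, convex (hence linearly convex) and has real-analytic boundary, so it falls under the cited theorem. On convex domains their division operators are the straight-segment formula. I would nevertheless give a direct argument, since for the ball at the origin it is short and makes the constant explicit.

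Fix $f\in H^\infty(\Ball_q)$ with $f(0)=0$, and define $g_i(z)=\int_0^1\partial_i f(tz)\,\dd t$. Since $tz\in\Ball_q$ for $t\in[0,1]$, the $g_i$ are well defined and holomorphic by differentiation under the integral sign, and $f(z)=f(z)-f(0)=\int_0^1\frac{\dd}{\dd t}f(tz)\,\dd t=\sum_i z_ig_i(z)$. The substantive point is the bound $\|g_i\|_\infty\le K\|f\|_\infty$. The naive Cauchy estimate $|\partial_i f(w)|\lesssim\|f\|_\infty/(1-|w|)$ only yields a logarithmically divergent integral near the boundary, so this is the main obstacle.

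To control it, I would expand $f=\sum_{k\ge1}f_k$ into homogeneous polynomials. Then
\[
  g_i=\sum_{k\ge1}\frac{1}{k}\,\partial_i f_k,
\]
because $\int_0^1 t^{k-1}\,\dd t=1/k$. Writing $D_i:=\sum_k \frac1k\partial_i\circ\pi_k$, where $\pi_k$ projects onto the degree-$k$ component, I need $D_i$ bounded on $H^\infty(\Ball_q)$. For $q=1$ this is immediate: $g=f/z$ and the maximum principle gives $K=1$. For general $q$, I would reduce to the one-variable slices $\lambda\mapsto f(\lambda\zeta)$ with $\zeta\in\partial\Ball_q$, and try the following. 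For fixed $z$, the function $\lambda\mapsto f(\lambda z/|z|)/\lambda$ is bounded by $\|f\|_\infty$ on the unit disc. Thus the radial derivative part $\sum_i z_ig_i=f$ is fine, but the individual $g_i$ also involve tangential derivatives. For these, the tangential derivatives of $f$ on $\Ball_q$ satisfy the sharper Cauchy estimate $|\langle\nabla f(w),v\rangle|\lesssim\|f\|_\infty(1-|w|)^{-1/2}$ for $v\perp w$, which comes from the Kor\'anyi-type geometry: the ball contains a disc of radius $\sim(1-|w|)^{1/2}$ in complex tangential directions. This estimate is integrable along the segment.

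So the plan is to decompose $e_i=\langle e_i,\hat z\rangle\hat z+v$, where $\hat z=z/|z|$. The radial component of $\int_0^1\nabla f(tz)\,\dd t$ is handled by the one-variable slice argument, namely by bounding $\int_0^1\partial_\lambda[f(\lambda\hat z)]|_{\lambda=t|z|}\,\dd t=f(z)/|z|$ when $|z|\ge1/2$, and by Cauchy estimates on a smaller ball otherwise. The tangential component is handled by integrating the $(1-t|z|)^{-1/2}$ bound. Together these give $\|g_i\|_\infty\le K\|f\|_\infty$ with $K$ depending only on $q$. If the tangential estimate proves delicate, I will fall back on the citation of Lemmers--Wiegerinck, which covers $\Ball_q$ directly.
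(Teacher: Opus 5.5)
Your proposal is correct. The reduction to $\Ball_q$ and the fallback citation of Lemmers--Wiegerinck match the paper. The paper's proof is in fact nothing but citations: Rudin's Sections~6.6.2--6.6.3, where the Ahern--Schneider solution at the origin is identified with \eqref{eq:integral-division}, and Lemmers--Wiegerinck for convex domains with smooth boundary.

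Your direct argument is a genuine alternative that supplies what the paper outsources to Rudin. It goes through, and more easily than you fear. For $z\neq0$ put $u=e_i-\overline{\hat z_i}\,\hat z$. Then $\partial_i f(tz)=\overline{\hat z_i}\,D_{\hat z}f(tz)+D_uf(tz)$, with $u\perp z$ (Hermitian) and $|u|\le1$.

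\emph{Radial part.} Since $\frac{\dd}{\dd t}f(tz)=|z|\,D_{\hat z}f(tz)$, this term integrates to $\overline{\hat z_i}\,f(z)/|z|$. The Schwarz lemma on the slice $\lambda\mapsto f(\lambda\hat z)$ gives $|f(z)|\le\|f\|_\infty|z|$ for every $z$. So the split at $|z|=1/2$ is unnecessary.

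\emph{Tangential part.} The complex tangential disc through $w$ in direction $u\perp w$ has radius exactly $(1-|w|^2)^{1/2}$. The Cauchy estimate therefore gives $|D_uf(tz)|\le|u|\,\|f\|_\infty(1-t^2|z|^2)^{-1/2}\le\|f\|_\infty(1-t)^{-1/2}$. This integrates to at most $2\|f\|_\infty$.

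Hence $\|g_i\|_\infty\le3\|f\|_\infty$, extended to $z=0$ by continuity, and no fallback is needed. The homogeneous-expansion remark and the separate $q=1$ case can be dropped.

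What each route buys: yours is self-contained and yields an explicit Gleason constant for the ball. Fed into Proposition~\ref{prop:product-Gleason}, it gives $K_\Omega\le6$ for any finite product of balls, hence $C_{\Omega,n}\le(12d)^{n-1}$ in Theorem~B, where the paper leaves $K_\Omega$ implicit. The citation route is shorter. Through Lemmers--Wiegerinck it also applies to all bounded convex domains with $C^{1+\varepsilon}$ boundary, but without a concrete constant.
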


\begin{proof}
The domains $R_{1,q}$ and $R_{p,1}$ are Euclidean unit balls in $\C^q$ and
$\C^p$, respectively. For the unit ball the bounded $H^\infty$-Gleason
property is classical; see Rudin
\cite[Sections~6.6.2--6.6.3, pp.~115--118]{rudin1980function}, where the
Ahern--Schneider solution at the origin is identified with
\eqref{eq:integral-division}. The uniform bound for these division operators
also follows from Lemmers--Wiegerinck
\cite[Theorem~4 and its proof in the arXiv version]{lemmers2002solving}, since
these balls are convex domains with smooth boundary.
\end{proof}

\section{A recipe for von Neumann type inequalities}\label{sec:recipe}

Throughout this section, $\Omega=\D_Q$ denotes an NC operator ball. By
Proposition~\ref{prop:automatic-DQ-structure}, spectral containment for
commuting tuples and closure under isometric compressions are automatic. By
Lemma~\ref{lem:matrix-coefficients}, the coordinate bounds
$r_i=\sup_{z\in\Omega(1)}|z_i|$ of the scalar level control the coordinate
norms at every matrix level; we keep the notation
$\rho_{\Omega}=\sum_{i=1}^d r_i$ of that lemma. We impose the following
two additional assumptions.

\medskip
\noindent\textbf{(A1) Transitive automorphisms.}
There is a subgroup $G\subseteq\Aut(\Omega(1))$ acting transitively on
$\Omega(1)$, such that every $\theta\in G$ extends to an NC automorphism
$\Theta$ of $\Omega$.

\medskip
\noindent\textbf{(A2) Gleason at the origin.}
The domain $\Omega(1)$ has the bounded $H^\infty$-Gleason property at $0$ with
constant $K_{\Omega}>0$.

The NC automorphisms in \textup{(A1)} are locally bounded: by
Lemma~\ref{lem:matrix-coefficients}, their images satisfy the coordinate
bounds $\|\Theta_n(X)_i\|\leq r_i$ for every $X\in\Omega(n)$ and
$i=1,\ldots,d$, uniformly in $n$. Lemma~\ref{lem:nc-map-holomorphic-fc}
therefore gives agreement with holomorphic functional calculus on commuting
tuples. Since these maps take values in $\Omega$, they preserve its commuting
locus.

Since $\Omega(1)$ is bounded, the coordinate functions lie in
$H^\infty(\Omega(1))$. Applying the Gleason decomposition to
$f(z)=z_j$ gives $z_j=\sum_{i=1}^d z_i g_i(z)$. Differentiating at $0$ yields
$g_j(0)=1$, so
\[
  1\leq \|g_j\|_{H^\infty(\Omega(1))}
  \leq K_{\Omega}\|z_j\|_{H^\infty(\Omega(1))}
  =K_{\Omega}\,r_j
  \leq K_{\Omega}\,\rho_{\Omega}.
\]
Thus $\rho_{\Omega}K_{\Omega}\geq 1$ always, and this lower bound is all that
the proof below will need. Note also that the product
$\rho_{\Omega}K_{\Omega}$ is unchanged if $\Omega$ is rescaled, since replacing
$\Omega$ by $\lambda\Omega$ multiplies each $r_i$ by $|\lambda|$ and divides
the Gleason constant by $|\lambda|$; the constant of
Theorem~\ref{thm:scaled-von-neumann-recipe} is therefore scale invariant, as it
must be.

\begin{theorem}\label{thm:scaled-von-neumann-recipe}
Let $\Omega=\D_Q$ be an NC operator ball.
Assume \textup{(A1)} and \textup{(A2)}.
Then for every
commuting tuple $T\in\Omega(n)$ and every $f\in H^\infty(\Omega(1))$,
\[
  \|f(T)\|\leq C_{\Omega,n}\,\|f\|_{H^\infty(\Omega(1))},
\]
where $C_{\Omega,1}=1$ and, for $n\geq 2$,
\[
  C_{\Omega,n}=(2\rho_{\Omega}K_{\Omega})^{n-1}.
\]
In particular, if every coordinate function has supremum at most $1$ on
$\Omega(1)$, then $\rho_{\Omega}\leq d$ and
$C_{\Omega,n}\leq(2dK_{\Omega})^{n-1}$.
\end{theorem}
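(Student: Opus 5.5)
We argue by induction on $n$, the matrix size. For $n=1$ a commuting tuple $T\in\Omega(1)$ is just a point $z\in\Omega(1)$, and $f(T)=f(z)$, so $|f(T)|\leq\|f\|_\infty$ and $C_{\Omega,1}=1$. For the inductive step, fix $n\geq 2$ and a commuting tuple $T\in\Omega(n)$. By Proposition~\ref{prop:automatic-DQ-structure}(2) the joint spectrum of $T$ lies in $\Omega(1)$. Simultaneously triangularizing by a unitary, we pick a joint eigenvalue $\lambda\in\Omega(1)$ together with a common unit eigenvector $e_1$ of the $T_i^*$. Concretely, this gives a decomposition $\C^n=\C\oplus\C^{n-1}$ in which every $T_i$ is block upper triangular with $(1,1)$ entry $\lambda_i$ and lower-right corner $B_i$.

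The first step is to move $\lambda$ to the origin. By (A1) there is $\theta\in G$ with $\theta(\lambda)=0$, and it has an NC lift $\Theta$. Set $S:=\Theta_n(T)$ and $h:=f\circ\theta^{-1}\in H^\infty(\Omega(1))$, so that $\|h\|_\infty=\|f\|_\infty$. By Lemma~\ref{lem:nc-map-holomorphic-fc}, $S=\theta(T)$ is a commuting tuple in $\Omega(n)$, and the composition rule for the holomorphic functional calculus gives $f(T)=h(S)$. Because $S$ is obtained from $T$ by the functional calculus, it keeps the same block triangular form: its $(1,1)$ entry is $\theta(\lambda)=0$, and its corner is $S'=\theta(B)$. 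By Proposition~\ref{prop:automatic-DQ-structure}(1) this corner is a commuting tuple in $\Omega(n-1)$.

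Next we decompose $h$. Write $h=h(0)+(h-h(0))$. The function $h-h(0)$ vanishes at $0$ and has sup norm at most $2\|f\|_\infty$, so (A2) gives $h(z)=h(0)+\sum_i z_ig_i(z)$ with $\|g_i\|_\infty\leq 2K_\Omega\|f\|_\infty$. Hence
\[
  h(S)=h(0)I+\sum_{i=1}^d S_i\,g_i(S).
\]
This is where the bound needs care. Bounding $\|S_i\|\leq r_i$ (Lemma~\ref{lem:matrix-coefficients}) and $\|g_i(S)\|\leq C_{\Omega,n}\|g_i\|_\infty$ would be circular, since it uses the constant we are trying to establish at level $n$. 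The fix is to use the triangular structure: since the first diagonal entry of every $S_i$ is zero, $S_i$ maps $\C^n$ into a space where only its first row and the corner $S'_i$ are nonzero. I would write $S_i=P S_i+(I-P)S_i$, where $P$ is the projection onto $e_1$. Then $(I-P)S_i g_i(S)=S_i'\,g_i(S')(I-P)$ roughly, and $g_i(S')$ is controlled by induction. The remaining piece, the first row times $g_i(S)$, would be handled by a symmetric trick: choose instead the triangularization with the eigenvalue placed in the last position, so that $S_i$ has zero last diagonal entry. Then $S_i g_i(S)=g_i(S)S_i$, and $g_i(S)S_i$ kills the last coordinate and only sees the compression to the top-left $(n-1)$-block, which is also a commuting tuple in $\Omega(n-1)$, being a compression to an invariant subspace.

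Concretely, with $e_n$ a joint eigenvector of $T$ (so $S_ie_n=0$) and $M=\operatorname{span}\{e_1,\dots,e_{n-1}\}$, we have $g_i(S)S_i=g_i(S)P_MS_i$, where $P_M$ is the projection onto $M$. In the upper triangular form the space $M$ is not invariant, but its complement $\C e_n$ is, so we should use the lower block: compressing to $\C e_n^\perp$ gives $P_Mg_i(S)P_M=g_i(P_MSP_M)$ on the quotient. The remaining terms can be arranged with one triangular corner estimate, giving $\|S_ig_i(S)\|\leq r_i\cdot C_{\Omega,n-1}\cdot 2K_\Omega\|f\|_\infty$ up to the boundary terms. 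Summing over $i$ yields
\[
  \|f(T)\|\leq\|f\|_\infty\bigl(1+2\rho_\Omega K_\Omega C_{\Omega,n-1}\bigr).
\]
This is not yet $(2\rho K)^{n-1}$ exactly. To absorb the $1$ I would avoid the splitting: take $h=h(0)+\sum z_ig_i$ and use $\|h(0)\|\leq\|f\|$, then exploit $\rho_\Omega K_\Omega\geq1$ in the form $1+2\rho K c\leq 2\rho K(c+\tfrac12)$. The main obstacle is getting this exact constant and making the compression of $g_i(S)$ to an $(n-1)$-block rigorous. I expect the intended argument to write $S_ig_i(S)$ so that only $g_i$ of an $(n-1)$-dimensional corner appears, for instance by an inductive bound applied to the Gleason factors, giving the factor $2\rho_\Omega K_\Omega$ per level.
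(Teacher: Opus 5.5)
Your skeleton (induction on $n$, triangularize, move the joint eigenvalue to $0$ with an NC lift, apply (A2), then use the induction hypothesis on an $(n-1)$-dimensional corner) matches the paper. However, the proposal does not reach the stated constant, and this gap is not cosmetic.

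Removing $h(0)$ by subtraction leaves the term $h(0)I$ in $h(S)=h(0)I+\sum_i S_ig_i(S)$. The best this gives is the additive recursion $C_{\Omega,n}\leq 1+2\rho_\Omega K_\Omega C_{\Omega,n-1}$, i.e.\ $C_{\Omega,n}\leq\sum_{k=0}^{n-1}(2\rho_\Omega K_\Omega)^k$. That is strictly larger than $(2\rho_\Omega K_\Omega)^{n-1}$ for every $n\geq 2$. Your absorption step $1+2\rho Kc\leq 2\rho K(c+\tfrac12)$ only restates $\rho_\Omega K_\Omega\geq 1$ and still leaves an additive term, so the induction never closes at $(2\rho_\Omega K_\Omega)^{n-1}$.

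The missing idea (from Hartz--Richter--Shalit) is to kill the value at the origin with a disc automorphism, and then undo it with the one-variable von Neumann inequality. Normalize $\|h\|_\infty\leq\epsilon$, and set $c=h(0)$, $\psi(w)=(c-w)/(1-\bar c w)$, $u=\psi\circ h$. Then $u(0)=0$ and $\|u\|_\infty\leq 2\epsilon/(1+\epsilon^2)\leq 2\epsilon$, which is the pseudohyperbolic diameter bound for $\epsilon\D$. So (A2) and the corner estimate below give $\|u(S)\|\leq 2\epsilon\rho_\Omega K_\Omega C_{\Omega,n-1}$. Take $\epsilon=(2\rho_\Omega K_\Omega C_{\Omega,n-1})^{-1}\leq\tfrac12$; then $u(S)$ is a contraction. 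Since $\psi^{-1}$ is holomorphic near $\overline{\D}$ and maps $\D$ into $\D$, von Neumann's inequality yields $\|h(S)\|=\|\psi^{-1}(u(S))\|\leq 1$. Hence $C_{\Omega,n}=\epsilon^{-1}=2\rho_\Omega K_\Omega C_{\Omega,n-1}$, a purely multiplicative recursion. Note that the factor $2$ comes from the pseudohyperbolic bound, not from $\|h-h(0)\|_\infty\leq 2\|h\|_\infty$.

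The corner estimate is also simpler than your symmetric-trick discussion suggests, and no boundary terms arise. First, two slips:
\begin{itemize}
\item A common eigenvector of the $T_i^*$ placed first gives block lower triangular form. For upper triangular form with $(1,1)$ entry $\lambda_i$ you need $T_ie_1=\lambda_ie_1$.
\item In your $e_n$ variant, $S_ie_n=0$ gives $S_i=S_iP_M$, not $S_i=P_MS_i$.
\end{itemize}
Now the estimate itself. After the eigenvalue is moved to $0$, every $S_i$ has zero first column, so $S_i=S_i(I-P)$. Since $\C e_1$ is invariant under $g_i(S)$, we have $(I-P)g_i(S)P=0$, and the compression of $g_i(S)$ to $e_1^\perp$ is $g_i(S')$. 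Therefore $S_ig_i(S)=S_i(I-P)g_i(S)(I-P)=S_i\,(0\oplus g_i(S'))$. By Lemma~\ref{lem:matrix-coefficients} and induction, $\|S_ig_i(S)\|\leq\|S_i\|\,\|g_i(S')\|\leq r_iC_{\Omega,n-1}\|g_i\|_\infty$.

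The first-row piece $PS_ig_i(S)$ you were worried about is already of this form. Estimating $PS_i$ and $(I-P)S_i$ separately would only cost an extra constant. With these two repairs your argument becomes the paper's proof.
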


\begin{proof}
The proof follows the inductive strategy of Hartz--Richter--Shalit
\cite{hartzrichtershalit2021row}.

By Proposition~\ref{prop:automatic-DQ-structure}, the Taylor functional
calculus defines $f(T)$ for commuting $T\in\Omega(n)$ with
$\sigma(T)\subset\Omega(1)$.

We proceed by induction on $n$. The case $n=1$ is immediate: if
$T=\lambda\in\Omega(1)$, then $f(T)=f(\lambda)$ and
$|f(\lambda)|\leq\|f\|_{H^\infty(\Omega(1))}$.

Assume the theorem holds for $n-1$, and let $T=(T_1,\ldots,T_d)\in\Omega(n)$ be
commuting. By simultaneous upper triangularization, after a unitary change of
basis we may assume
\[
  T_i=\begin{bmatrix} a_i & b_i \\ 0 & B_i \end{bmatrix},
  \qquad i=1,\ldots,d,
\]
with $a=(a_1,\ldots,a_d)\in\Omega(1)$. By~(A1), there exists
$\theta\in G$ with $\theta(a)=0$. Let $\Theta$ be a corresponding
NC lift. By Lemma~\ref{lem:nc-map-holomorphic-fc}, $\Theta_n(T)=\theta(T)$ is
commuting, and the composition property of holomorphic functional calculus
gives $f(T)=(f\circ\theta^{-1})\bigl(\Theta_n(T)\bigr)$. Replacing $T$ by
$\Theta_n(T)$ and $f$ by $f\circ\theta^{-1}$ (and using invariance of
$H^\infty(\Omega(1))$ under biholomorphic self-maps), we may assume
$a_i=0$ for all~$i$. Then
\[
  T_i=\begin{bmatrix} 0 & b_i \\ 0 & B_i \end{bmatrix},
\]
and $B=(B_1,\ldots,B_d)\in\Omega(n-1)$ is commuting by
Proposition~\ref{prop:automatic-DQ-structure}.

Suppose $\|f\|_{H^\infty(\Omega(1))}\leq\epsilon$, with $\epsilon>0$ to be
chosen. The range of $f$ lies in $\epsilon\D$. Let $c=f(0)$ and let $\psi$ be the disk
automorphism $\psi(w)=\dfrac{c-w}{1-\overline{c}\,w}$, so that $\psi(c)=0$.
Set $h:=\psi\circ f$. For $w\in\epsilon\D$,
\[
  |\psi(w)|
  \leq \sup_{|\alpha|,|\beta|\leq\epsilon}
       \left|\frac{\alpha-\beta}{1-\overline{\alpha}\beta}\right|
  \leq \frac{2\epsilon}{1+\epsilon^2}
  \leq 2\epsilon,
\]
where the middle quantity is the pseudohyperbolic diameter of
$\epsilon\D$, so $\|h\|_{H^\infty(\Omega(1))}\leq 2\epsilon$ and $h(0)=0$.
By~(A2), there exist $h_1,\ldots,h_d\in H^\infty(\Omega(1))$ with
\[
  h(z)=\sum_{i=1}^d z_i h_i(z),
  \qquad
  \|h_i\|_{H^\infty(\Omega(1))}
  \leq K_{\Omega}\,\|h\|_{H^\infty(\Omega(1))}
  \leq 2\epsilon K_{\Omega}.
\]

Using functional calculus on the commuting tuple $T$,
\[
  h(T)=\sum_{i=1}^d T_i h_i(T)
  =\sum_{i=1}^d
  \begin{bmatrix} 0 & b_i \\ 0 & B_i \end{bmatrix}
  \begin{bmatrix} h_i(0) & * \\ 0 & h_i(B) \end{bmatrix}
  =\sum_{i=1}^d
  \begin{bmatrix} 0 & b_i h_i(B) \\ 0 & B_i h_i(B) \end{bmatrix}.
\]
By Lemma~\ref{lem:matrix-coefficients}, $\|T_i\|\leq r_i$, hence from the block
structure $|b_i|^2+B_i^*B_i\leq r_i^2\,I_{n-1}$. Therefore
\begin{align*}
  \|h(T)\|
  &\leq \sum_{i=1}^d
  \bigl\|
    \begin{bmatrix}
      0 & 0 \\
      0 & h_i(B)^*(|b_i|^2+B_i^*B_i)h_i(B)
    \end{bmatrix}
  \bigr\|^{1/2} \\
  &\leq \sum_{i=1}^d r_i\,\|h_i(B)\|
  \leq \rho_{\Omega}\,C_{\Omega,n-1}\max_i\|h_i\|_{H^\infty(\Omega(1))}
  \leq 2\epsilon \rho_{\Omega}\,C_{\Omega,n-1}K_{\Omega}.
\end{align*}
Choose
$\epsilon=\bigl(2\rho_{\Omega}C_{\Omega,n-1}K_{\Omega}\bigr)^{-1}$. Since
$\rho_{\Omega}K_{\Omega}\geq 1$ and $C_{\Omega,n-1}\geq 1$, we have
$\epsilon\leq 1/2$, so
the preceding normalization takes place inside the unit disc. The choice also gives
$\|h(T)\|\leq 1$. Thus $h(T)$ is a contraction. Since $\psi^{-1}$ is a
holomorphic self-map of $\D$, the classical one-variable von Neumann inequality
applied to the contraction $h(T)$ gives $\|\psi^{-1}(h(T))\|\leq 1$. The
composition property of the Taylor functional calculus says that, whenever
$\varphi$ is holomorphic near the spectrum of $f(T)$,
$(\varphi\circ f)(T)=\varphi(f(T))$. Applying this to
$h=\psi\circ f$ and then to $\psi^{-1}$ gives
\[
  \psi^{-1}(h(T))=(\psi^{-1}\circ h)(T)=f(T).
\]
Hence $\|f(T)\|\leq 1$ for this normalized~$f$.

For general $f$, set
$g=\epsilon f/\|f\|_{H^\infty(\Omega(1))}$. Then
$\|g\|_{H^\infty(\Omega(1))}=\epsilon$ and $\|g(T)\|\leq 1$, so
\[
  \|f(T)\|\leq \epsilon^{-1}\|f\|_{H^\infty(\Omega(1))}
  =C_{\Omega,n}\,\|f\|_{H^\infty(\Omega(1))},
  \qquad
  C_{\Omega,n}=\epsilon^{-1}
  =2\rho_{\Omega}K_{\Omega}C_{\Omega,n-1}.
\]
Together with $C_{\Omega,1}=1$, this recursion gives
$C_{\Omega,n}=(2\rho_{\Omega}K_{\Omega})^{n-1}$, as claimed. The final
assertion of the theorem is immediate, since $r_i\leq 1$ for every $i$ forces
$\rho_{\Omega}\leq d$.
\end{proof}

\begin{remark}[Constants and lower bounds]
The $n$-dependence of $C_{\Omega,n}$ is not merely an artifact of the proof;
see Remark~\ref{rem:lower-bounds-row-column} below. The explicit exponential
upper bound is not claimed to be sharp.
\end{remark}

\subsection{Finite-dimensional representations}
\label{subsec:fd-representations}

Write $A(\Omega(1))$ for the uniform algebra of functions holomorphic on
$\Omega(1)$ and continuous on $\overline{\Omega(1)}$, equipped with the scalar
supremum norm
\[
  \|f\|_\infty=\sup_{z\in\Omega(1)}|f(z)|.
\]
This is the classical ball-algebra analogue on the scalar level, as in
Remark~\ref{rem:classical-not-nc-Hinfty}; in particular the norm does not see
matrix levels of~$\Omega$.

In this subsection we read Theorem~\ref{thm:scaled-von-neumann-recipe} as a
statement about the bounded unital representations of $A(\Omega(1))$ on
$\C^n$, and show that, up to similarity, these are exactly the evaluations at
commuting tuples in $\overline{\Omega}(n)$. Two known results frame the
question. In the NC function algebra $A(\D_Q)$ the norm already sees every
matrix level, and the coordinate evaluation map identifies the completely
contractive $n$-dimensional representations of $A(\D_Q)$ with
$\overline{\D_Q(n)}$ \cite[Proposition~5.1]{sampatshalit2025classification}.
For the ball algebra $A(\Ball_d)$, whose norm is the scalar one,
Hartz--Richter--Shalit used their von Neumann type inequality to show that the
bounded unital $n$-dimensional representations are precisely the maps
$p\mapsto p(T)$ with $T$ a commuting tuple jointly similar to a row
contraction, and obtained analogous results for the multiplier algebras of the
weighted Dirichlet spaces on $\Ball_d$
\cite[Section~6 and Theorem~6.4]{hartzrichtershalit2021row}.
Theorem~\ref{thm:fd-rep-classification} below extends the description for
$A(\Ball_d)$ to every NC operator ball satisfying \textup{(A1)} and
\textup{(A2)}.

We begin with the evaluations themselves. Since the scalar norm does not see
the matrix levels of $\Omega$, their boundedness is not tautological; it is
precisely the content of Theorem~\ref{thm:scaled-von-neumann-recipe}.

\begin{corollary}[Evaluations on the closed commuting locus]
\label{cor:closed-locus}
Let $\Omega=\D_Q$ be an NC operator ball, and assume
\textup{(A1)} and \textup{(A2)}.
Let $C_{\Omega,n}$ be the constant of
Theorem~\ref{thm:scaled-von-neumann-recipe}. Then for every commuting tuple
$T\in\overline{\Omega}(n)$,
\[
  \|p(T)\|\leq C_{\Omega,n}\,\|p\|_\infty
  \qquad\text{for every polynomial }p,
\]
and $p\mapsto p(T)$ extends uniquely to a bounded unital algebra homomorphism
$\Phi_T:A(\Omega(1))\to\M_n(\C)$ with $\|\Phi_T\|\leq C_{\Omega,n}$. If
moreover $T\in\Omega(n)$, then for every $f\in A(\Omega(1))$, the value
$\Phi_T(f)=f(T)$ is given by the Taylor functional calculus.
\end{corollary}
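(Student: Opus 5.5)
The plan is to pass from the open ball to its closure by dilation, and to pass from polynomials to $A(\Omega(1))$ by density. Throughout I write $\overline{\Omega}(n)=\{X:\|Q(X)\|\leq 1\}$, which is the closure of $\Omega(n)$ because $X\mapsto\|Q(X)\|$ is a norm on $\M_n(\C)^d$ (Remark~\ref{rem:pencil-basics}).

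\textbf{Polynomial estimate.} Let $T\in\overline{\Omega}(n)$ be commuting and let $0<t<1$. Then $tT$ is commuting and $\|Q(tT)\|=t\|Q(T)\|<1$, so $tT\in\Omega(n)$. A polynomial $p$ is bounded on the bounded domain $\Omega(1)$, so $p\in H^\infty(\Omega(1))$. For polynomials the Taylor functional calculus coincides with ordinary evaluation. Theorem~\ref{thm:scaled-von-neumann-recipe} therefore gives
\[
  \|p(tT)\|\leq C_{\Omega,n}\|p\|_\infty .
\]
Since $t\mapsto p(tT)$ is a matrix polynomial in $t$, letting $t\to 1$ yields $\|p(T)\|\leq C_{\Omega,n}\|p\|_\infty$.

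\textbf{Extension to $A(\Omega(1))$.} Each level $\Omega(1)$ is bounded, convex and balanced. Hence for $f\in A(\Omega(1))$ the dilates $f_s(z)=f(sz)$, $s<1$, converge to $f$ uniformly on $\overline{\Omega(1)}$ by uniform continuity. Each $f_s$ is holomorphic on a neighbourhood of the compact convex set $\overline{\Omega(1)}$, so it is a uniform limit of its Taylor polynomials there. Thus polynomials are dense in $A(\Omega(1))$.

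The map $p\mapsto p(T)$ is unital and multiplicative on polynomials, and by the estimate above it is bounded. It therefore extends uniquely by continuity to a bounded unital homomorphism $\Phi_T$ with $\|\Phi_T\|\leq C_{\Omega,n}$. Multiplicativity passes to the limit because multiplication is jointly continuous.

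\textbf{Agreement with the Taylor calculus when $T\in\Omega(n)$.} This step needs the most care. By Proposition~\ref{prop:automatic-DQ-structure}, $\sigma(T)$ is a finite subset of the open set $\Omega(1)$. I plan to use the continuity of the Taylor functional calculus with respect to uniform convergence on a compact neighbourhood of $\sigma(T)$. Concretely, for a commuting matrix tuple, $g(T)$ depends only on finitely many derivatives of $g$ at the joint eigenvalues; one sees this via the simultaneous triangularization, or via the Cauchy--Weil integral formula over a small polydisc neighbourhood of $\sigma(T)$.

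Take polynomials $p_k\to f$ uniformly on $\overline{\Omega(1)}$. Then the Cauchy estimates show that $p_k\to f$ uniformly with all derivatives on a compact neighbourhood $K\subset\Omega(1)$ of $\sigma(T)$. Hence $p_k(T)\to f(T)$ in the Taylor calculus. On the other hand, $p_k(T)\to\Phi_T(f)$ by the definition of $\Phi_T$. So $\Phi_T(f)=f(T)$.

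Alternatively, one can apply Theorem~\ref{thm:scaled-von-neumann-recipe} to $f-p_k$, which lies in $H^\infty(\Omega(1))$, to get directly
\[
  \|f(T)-p_k(T)\|\leq C_{\Omega,n}\|f-p_k\|_\infty\to 0 .
\]
This second route avoids the continuity discussion altogether, and I expect to use it.
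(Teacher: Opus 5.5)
Your proposal is correct and follows the paper's proof. You dilate to $tT\in\Omega(n)$ and let $t\to1$. You get density of polynomials from $f(s\,\cdot\,)\to f$ and the homogeneous expansion of $f(s\,\cdot\,)$ on the balanced domain $s^{-1}\Omega(1)$; note it is this balancedness, not convexity of $\overline{\Omega(1)}$, that makes the Taylor polynomials converge there. Finally you identify $\Phi_T(f)$ with $f(T)$ by density. Your preferred last step, applying Theorem~\ref{thm:scaled-von-neumann-recipe} to $f-p_k$, is a valid shortcut for the paper's appeal to continuity of the Taylor calculus under locally uniform convergence; the paper's route does not invoke \textup{(A1)}--\textup{(A2)} a second time.
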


\begin{proof}
Every level of $\Omega$ is balanced by Remark~\ref{rem:pencil-basics}, so for
$0<r<1$ we have $\|Q(rT)\|=r\|Q(T)\|\leq r<1$, and $rT$ is a commuting tuple in
$\Omega(n)$. Theorem~\ref{thm:scaled-von-neumann-recipe} gives
$\|p(rT)\|\leq C_{\Omega,n}\|p\|_\infty$, and letting $r\to1$ yields the
displayed estimate, since $p(rT)\to p(T)$.

For the extension it suffices to know that polynomials are dense in
$A(\Omega(1))$. Let $f\in A(\Omega(1))$ and $0<r<1$. As $f$ is continuous on the
compact set $\overline{\Omega(1)}$, it is uniformly continuous there, so
$f(r\,\cdot\,)\to f$ uniformly on $\overline{\Omega(1)}$ as $r\to1$. Moreover
$\|Q(rz)\|\leq r<1$ for $z\in\overline{\Omega(1)}$, so
$\overline{\Omega(1)}\subset r^{-1}\Omega(1)$ and $f(r\,\cdot\,)$ is holomorphic
on the balanced domain $r^{-1}\Omega(1)$; its homogeneous expansion therefore
converges uniformly on the compact subset $\overline{\Omega(1)}$. Hence $f$ is a
uniform limit of polynomials, and $\Phi_T$ is the unique continuous extension of
$p\mapsto p(T)$.

Finally, let $T\in\Omega(n)$. By
Proposition~\ref{prop:automatic-DQ-structure}\textup{(2)},
$\sigma(T)\subset\Omega(1)$, so the Taylor functional calculus of
Remark~\ref{rem:classical-not-nc-Hinfty} defines $f(T)$ for every
$f\in A(\Omega(1))\subset H^\infty(\Omega(1))$, and $f\mapsto f(T)$ is a
unital homomorphism. It is continuous for locally uniform convergence on
$\Omega(1)$, hence for the supremum norm, and it agrees with $\Phi_T$ on
polynomials; by density, $\Phi_T(f)=f(T)$.
\end{proof}

For commuting $T\in\overline{\Omega}(n)$, the corollary defines $\Phi_T$ on
$A(\Omega(1))$ by uniform polynomial approximation. When
$\sigma(T)\subset\Omega(1)$, this evaluation agrees with the Taylor
functional calculus. In general, however, the joint spectrum may meet the
boundary of $\Omega(1)$, and a function in $H^\infty(\Omega(1))$ need not
extend holomorphically to a neighbourhood of that spectrum. The corollary
therefore does not assert an $H^\infty(\Omega(1))$ functional calculus for
every tuple in the closed commuting locus.

The representations of Corollary~\ref{cor:closed-locus} do not exhaust the
bounded representations of $A(\Omega(1))$, because the two sides have
different symmetries. If $\Phi$ is a bounded unital representation of
$A(\Omega(1))$ on $\C^n$ and $S\in\M_n(\C)$ is invertible, then
$S^{-1}\Phi(\,\cdot\,)S$ is another one, of norm at most
$\|S\|\,\|S^{-1}\|\,\|\Phi\|$. The commuting locus, by contrast, is only
unitarily invariant, since $\Omega(n)$ is the open unit ball of the norm
$X\mapsto\|Q(X)\|$ on $\M_n(E_Q)$ (Remark~\ref{rem:pencil-basics}). The
natural candidate for the set of all bounded representations is therefore the
similarity orbit of the closed commuting locus.

To identify it, we compare $A(\Omega(1))$ with an operator algebra whose
completely contractive representations can be read off from the pencil,
namely the algebra $A(\mathfrak{C}\D_Q)$ of NC functions on the commuting locus
$\mathfrak{C}\D_Q:=\Comm(\D_Q)$, regarded as the commutative free subvariety of
$\D_Q$ \cite[Example~6.13]{sampatshalit2025classification}. Following
\cite[Section~1.2]{sampatshalit2025classification}, $A(\mathfrak{C}\D_Q)$ is
the closure of the free polynomials in the graded norm
$\|f\|=\sup_n\sup_{X\in\Comm(\Omega)(n)}\|f(X)\|$, and it is a unital operator
algebra with the matrix norms
$\|[f_{jk}]\|=\sup_n\sup_{X\in\Comm(\Omega)(n)}\|[f_{jk}(X)]\|$. For the row
ball, $A(\mathfrak{C}\D_Q)$ identifies with the closure of the polynomials in
the multiplier norm of $\operatorname{Mult}(H^2_d)$. Its scalar-level image
is contained in the ball algebra $A(\Ball_d)$, and this inclusion is proper
when $d\geq 2$; see
\cite[Section~1]{sampatshalit2025classification} and \cite{hartz2025operator}.

In the following theorem $z_1,\ldots,z_d$ denote the coordinate functions in
$A(\Omega(1))$ and $Z_1,\ldots,Z_d$ the coordinate functions in
$A(\mathfrak{C}\D_Q)$. Since polynomials are dense in both algebras (for
$A(\Omega(1))$ by the proof of Corollary~\ref{cor:closed-locus}, and for
$A(\mathfrak{C}\D_Q)$ by definition), a bounded unital homomorphism on either
algebra is determined by its values on the coordinates.

\begin{theorem}\label{thm:fd-rep-classification}
Let $\Omega=\D_Q$ be an NC operator ball, and assume
\textup{(A1)} and \textup{(A2)}. For $T=(T_1,\ldots,T_d)\in\M_n(\C)^d$ the following are
equivalent:
\begin{enumerate}
\item there is a bounded unital homomorphism
$\Phi:A(\Omega(1))\to\M_n(\C)$ with $\Phi(z_i)=T_i$ for $i=1,\ldots,d$;
\item there is a bounded unital homomorphism
$\rho:A(\mathfrak{C}\D_Q)\to\M_n(\C)$ with $\rho(Z_i)=T_i$ for
$i=1,\ldots,d$;
\item $T$ is jointly similar to a commuting tuple in $\overline{\Omega}(n)$,
that is, $T=S^{-1}XS$ for some invertible $S\in\M_n(\C)$ and some commuting
$X\in\overline{\Omega}(n)$.
\end{enumerate}
In this case $S$ in \textup{(3)} may be chosen with
$\|S\|\,\|S^{-1}\|\leq\|\rho\|_{\mathrm{cb}}$, and then
$\|\Phi\|\leq C_{\Omega,n}\|S\|\,\|S^{-1}\|$, where $C_{\Omega,n}$ is the
constant of Theorem~\ref{thm:scaled-von-neumann-recipe}. In particular, the
bounded unital $n$-dimensional representations of $A(\Omega(1))$ and of
$A(\mathfrak{C}\D_Q)$ are parametrized by the same set of tuples, namely the
similarity orbit of the closed commuting locus $\Comm(\overline{\Omega})(n)$.
\end{theorem}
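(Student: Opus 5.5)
The plan is to prove the cycle $(1)\Rightarrow(2)\Rightarrow(3)\Rightarrow(1)$. Only the step $(2)\Rightarrow(3)$ needs an outside ingredient, namely Paulsen's similarity theorem; the other two steps are formal once Corollary~\ref{cor:closed-locus} is available.

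\emph{Step $(1)\Rightarrow(2)$.} I would construct a contractive unital homomorphism $\pi:A(\mathfrak{C}\D_Q)\to A(\Omega(1))$ with $\pi(Z_i)=z_i$, and then set $\rho:=\Phi\circ\pi$.
\begin{enumerate}
\item On free polynomials, let $\pi$ be restriction to the scalar level, which abelianizes the words.
\item Since $\Omega(1)$ is part of the commuting locus, $\sup_{\Omega(1)}|p|\leq\|p\|_{A(\mathfrak{C}\D_Q)}$. So $\pi$ is contractive on polynomials and extends to the closure.
\item Each value of the extension is a uniform limit on $\Omega(1)$ of polynomials. It is therefore holomorphic on $\Omega(1)$ and extends continuously to $\overline{\Omega(1)}$, so it lies in $A(\Omega(1))$.
\item Multiplicativity passes to the limit.
\end{enumerate}
Then $\rho=\Phi\circ\pi$ is bounded, unital and multiplicative, and $\rho(Z_i)=T_i$.

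\emph{Step $(2)\Rightarrow(3)$.} This is the main step, and it rests on Paulsen's similarity theorem.
\begin{enumerate}
\item Because the range of $\rho$ is $\M_n(\C)$, the bounded map $\rho$ is automatically completely bounded, with $\|\rho\|_{\mathrm{cb}}\leq n\|\rho\|$.
\item Paulsen's theorem, applied to a cb unital homomorphism of the operator algebra $A(\mathfrak{C}\D_Q)$, gives an invertible $S$ with $\|S\|\,\|S^{-1}\|\leq\|\rho\|_{\mathrm{cb}}$ such that $\rho'=S\rho(\cdot)S^{-1}$ is completely contractive.
\item Put $X_i:=\rho'(Z_i)$. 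The $X_i$ commute because the $Z_i$ commute in the algebra: their free commutators vanish on every commuting tuple.
\item By complete contractivity at matrix level $r\times c$,
\[
\|Q(X)\|=\Bigl\|\rho'^{(r,c)}\bigl(\textstyle\sum_i A_i\otimes Z_i\bigr)\Bigr\|\leq\sup_m\sup_{Y\in\Comm(\Omega)(m)}\|Q(Y)\|\leq 1 .
\]
Here the rectangular matrix norm is the one induced by the operator algebra structure.
\item Hence $X\in\Comm(\overline{\Omega})(n)$ and $T=S^{-1}XS$.
\end{enumerate}
The delicate points are checking that the rectangular matrix norms behave as claimed and tracking the direction of the conjugation. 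The cb-norm bound on $S$ is exactly the one stated.

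\emph{Step $(3)\Rightarrow(1)$.} Corollary~\ref{cor:closed-locus} gives a bounded unital $\Phi_X$ with $\|\Phi_X\|\leq C_{\Omega,n}$. Take $\Phi:=S^{-1}\Phi_X(\cdot)S$. Then $\Phi(z_i)=T_i$ and $\|\Phi\|\leq C_{\Omega,n}\|S\|\,\|S^{-1}\|$, which is the asserted estimate when $S$ is chosen as in the previous step. Since polynomials are dense in both algebras, representations are determined by the tuple $T$, and the final parametrization statement follows.
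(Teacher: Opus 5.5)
Your proposal is correct and follows the paper's proof essentially step for step. The paper uses the same restriction homomorphism $A(\mathfrak{C}\D_Q)\to A(\Omega(1))$ for $(1)\Rightarrow(2)$, Smith's lemma with Paulsen's similarity theorem and complete contractivity on $Q(Z)=\sum_i A_i\otimes Z_i$ (with $X=STS^{-1}$) for $(2)\Rightarrow(3)$, and the conjugated evaluation $S^{-1}\Phi_X(\cdot)S$ from Corollary~\ref{cor:closed-locus} for $(3)\Rightarrow(1)$.
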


\begin{proof}
\textup{(3)}$\Rightarrow$\textup{(1)}. Let $T=S^{-1}XS$ with $X$ as in
\textup{(3)}. By Corollary~\ref{cor:closed-locus}, $X$ induces a bounded unital
homomorphism $\Phi_X:A(\Omega(1))\to\M_n(\C)$ with $\Phi_X(z_i)=X_i$ and
$\|\Phi_X\|\leq C_{\Omega,n}$. Then $\Phi:=S^{-1}\Phi_X(\,\cdot\,)S$ is a
bounded unital homomorphism with $\Phi(z_i)=T_i$ and
$\|\Phi\|\leq C_{\Omega,n}\|S\|\,\|S^{-1}\|$.

\textup{(1)}$\Rightarrow$\textup{(2)}. Since
$\mathfrak{C}\D_Q(1)=\Omega(1)$, restriction to the scalar level defines a
contractive unital homomorphism
\[
  R:A(\mathfrak{C}\D_Q)\longrightarrow A(\Omega(1)),
  \qquad R(F)=F\big|_{\Omega(1)}.
\]
Indeed, the graded norm dominates the scalar supremum norm, so polynomial
approximation in $A(\mathfrak{C}\D_Q)$ gives uniform polynomial approximation
on $\overline{\Omega(1)}$. Thus $R$ takes values in $A(\Omega(1))$ and has
norm at most $1$. Set $\rho:=\Phi\circ R$. Then $\rho$ is a bounded unital
homomorphism with $\rho(Z_i)=\Phi(z_i)=T_i$ and
$\|\rho\|\leq\|\Phi\|$, as required.

\textup{(2)}$\Rightarrow$\textup{(3)}. Since $\rho$ takes values in
$\M_n(\C)$, it is completely bounded by Smith's lemma
\cite{smith1983completely}, with $\|\rho\|_{\mathrm{cb}}=\|\rho^{(n)}\|\leq
n\|\rho\|$. As $A(\mathfrak{C}\D_Q)$ is a unital operator algebra, Paulsen's
similarity theorem \cite[Theorem~9.1]{paulsen2002completely} provides an
invertible $S\in\M_n(\C)$ with $\|S\|\,\|S^{-1}\|\leq\|\rho\|_{\mathrm{cb}}$
such that $\sigma:=S\rho(\,\cdot\,)S^{-1}$ is completely contractive. Put
$X:=STS^{-1}$, so that $X_i=\sigma(Z_i)$. The coordinates $Z_i$ commute in
$A(\mathfrak{C}\D_Q)$, since they are evaluated only on commuting tuples, and
hence so do the $X_i$. Moreover, the element
$Q(Z)=\sum_{i=1}^dA_i\otimes Z_i$ of $\M_{r,c}\bigl(A(\mathfrak{C}\D_Q)\bigr)$
has norm $\sup_n\sup_{Y\in\Comm(\Omega)(n)}\|Q(Y)\|\leq1$, and complete
contractivity of $\sigma$ gives
\[
  \|Q(X)\|
  =\bigl\|(\mathrm{id}_{\M_{r,c}}\otimes\sigma)\bigl(Q(Z)\bigr)\bigr\|
  \leq\|Q(Z)\|
  \leq1.
\]
Thus $X$ is a commuting tuple in $\overline{\Omega}(n)$ and $T=S^{-1}XS$.
This is the argument of \cite[Proposition~5.1]{sampatshalit2025classification}
applied to $A(\mathfrak{C}\D_Q)$; compare the identification of the completely
contractive representations of $A(\mathfrak{V})$ for homogeneous subvarieties
$\mathfrak{V}$ in \cite[Proposition~5.3]{sampatshalit2025classification}.

The norm estimates follow by combining the bound on $\|S\|\,\|S^{-1}\|$ from
the last step with the estimate for $\|\Phi\|$ from the first.
\end{proof}

\begin{remark}\label{rem:fd-rep-hypotheses}
Only the implication \textup{(3)}$\Rightarrow$\textup{(1)} uses \textup{(A1)}
and \textup{(A2)}. Indeed, a commuting $X\in\overline{\Omega}(n)$ satisfies
$\|p(X)\|=\lim_{r\to1}\|p(rX)\|\leq\|p\|_{A(\mathfrak{C}\D_Q)}$, so it always
induces a completely contractive representation of $A(\mathfrak{C}\D_Q)$, and
\textup{(2)}$\Leftrightarrow$\textup{(3)} holds for every NC operator ball.
The content of Theorem~\ref{thm:fd-rep-classification} is therefore that
passing to the much smaller scalar norm of $A(\Omega(1))$ loses none of these
$n$-dimensional representations; this is exactly what
Theorem~\ref{thm:scaled-von-neumann-recipe} supplies.
\end{remark}

\begin{remark}[Sharpness of the description]
\label{rem:similarity-gap}
Neither the similarity in Theorem~\ref{thm:fd-rep-classification}\textup{(3)}
nor the constant $C_{\Omega,n}$ can be dispensed with.

The similarity orbit is strictly larger than the closed commuting locus. Take
$\Omega(1)=\D$, so that $\Omega(n)$ consists of the strict contractions, and
let $T$ be the nilpotent matrix of Remark~\ref{rem:spectral-not-enough}. Then
$f(T)=f(0)I+f'(0)T$ for every $f\in A(\D)$, and the Schwarz--Pick estimate
$|f'(0)|\leq\|f\|_\infty$ gives $\|\Phi_T\|\leq 1+|t|$. Thus $T$ defines a
bounded representation of $A(\D)$ for every $t\in\C$, although
$\|Q(T)\|=|t|$ places it outside $\overline{\Omega}(2)$ as soon as $|t|>1$. In
accordance with Theorem~\ref{thm:fd-rep-classification}, for $t\neq0$ we have
$T=S^{-1}XS$ with
\[
  X=\begin{bmatrix}0&1\\0&0\end{bmatrix}\in\overline{\Omega}(2),
  \qquad
  S=\begin{bmatrix}1&0\\0&t\end{bmatrix},
\]
so what controls $\|\Phi_T\|$ is the similarity constant of $T$, not the norm
of $T$ itself. Nor can similarity be relaxed to a spectral condition: for the ball
algebra, a commuting tuple $T$ with $\sigma(T)\subset\overline{\Ball_d}$ need
not define a bounded representation of $A(\Ball_d)$
\cite[Remark~6.5]{hartzrichtershalit2021row}.

On the other hand, the closed commuting locus is strictly larger than the set
of contractive representations of $A(\Omega(1))$. For the polydisc with
$d\geq3$ a commuting tuple of contractions lies in $\overline{\Omega}(n)$ and
yet need not satisfy $\|p(T)\|\leq\|p\|_\infty$ \cite{varopoulos1974inequality},
so $\Phi_T$ is not even contractive, although the corresponding
representation of $A(\mathfrak{C}\D_Q)$ is completely contractive. Thus the passage from $A(\mathfrak{C}\D_Q)$ to
$A(\Omega(1))$ preserves boundedness but not contractivity, and the constant
$C_{\Omega,n}$ in Corollary~\ref{cor:closed-locus} cannot in general be taken
equal to~$1$. This is the precise point at which the scalar-level picture
departs from the NC one, where $\overline{\D_Q(n)}$ is exactly the set of
completely contractive representations of $A(\D_Q)$.
\end{remark}

\section{Gleason's problem on product domains}\label{sec:gleason-product}

\begin{lemma}[Products of NC operator balls]
\label{lem:product-nc-ball}
Let $\Omega'=\D_{Q'}$ and $\Omega''=\D_{Q''}$ be NC operator balls, where
$Q'$ is a linear pencil in $d'$ variables with coefficients in
$\M_{r',c'}(\C)$ and $Q''$ is a linear pencil in $d''$ variables with
coefficients in $\M_{r'',c''}(\C)$. Define a linear pencil $Q$ in $d'+d''$
variables, with coefficients in $\M_{r'+r'',\,c'+c''}(\C)$, by
\[
  Q(X,Y)
  :=
  Q'(X)\oplus Q''(Y)
  :=
  \begin{pmatrix}
    Q'(X) & 0 \\
    0 & Q''(Y)
  \end{pmatrix}.
\]
Explicitly, the coefficients of $Q$ are $A_i'\oplus 0$ for $i\leq d'$ and
$0\oplus A_j''$ for $j\leq d''$, where $A_i'$ and $A_j''$ denote the
coefficients of $Q'$ and $Q''$. Then
$\|Q(X,Y)\|=\max\bigl\{\|Q'(X)\|,\|Q''(Y)\|\bigr\}$, and the levelwise
product $\Omega'\times\Omega''$ coincides with $\D_Q$. Iterating, a finite
levelwise product of NC operator balls is again an NC operator ball.
Furthermore $\Omega(1)=\Omega'(1)\times\Omega''(1)$ is bounded, and the
coordinate suprema of the product are those of the
factors, so that $\rho_{\Omega}=\rho_{\Omega'}+\rho_{\Omega''}$.
\end{lemma}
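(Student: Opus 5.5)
The plan is to verify each assertion directly from Definition~\ref{def:DQ} and Remark~\ref{rem:pencil-basics}; the argument is elementary and reduces to three points.

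First, the norm of a block-diagonal operator is the maximum of the norms of its diagonal blocks. At level $n$ we have $Q(X,Y)=\sum_i (A_i'\oplus 0)\otimes X_i+\sum_j(0\oplus A_j'')\otimes Y_j$. After the canonical shuffle identifying $(\C^{r'}\oplus\C^{r''})\otimes\C^n$ with $(\C^{r'}\otimes\C^n)\oplus(\C^{r''}\otimes\C^n)$, and likewise on the column side, this operator becomes the block-diagonal operator $Q'(X)\oplus Q''(Y)$. The shuffle is unitary, so $\|Q(X,Y)\|=\max\{\|Q'(X)\|,\|Q''(Y)\|\}$. Hence $\|Q(X,Y)\|<1$ holds exactly when $X\in\Omega'(n)$ and $Y\in\Omega''(n)$, which gives $\D_Q=\Omega'\times\Omega''$ levelwise.

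Second, we must check that the coefficients of $Q$ are linearly independent, so that $\D_Q$ is an NC operator ball in the paper's sense. Suppose $\sum_i\alpha_i(A_i'\oplus0)+\sum_j\beta_j(0\oplus A_j'')=0$. Reading off the two diagonal blocks gives $\sum_i\alpha_iA_i'=0$ and $\sum_j\beta_jA_j''=0$, and independence of the $A_i'$ and of the $A_j''$ separately forces all $\alpha_i$ and $\beta_j$ to vanish. Iterating this two-factor statement, by induction on the number of factors, yields the claim for finite products.

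Third, the scalar level is $\Omega(1)=\Omega'(1)\times\Omega''(1)$, a product of bounded sets, so it is bounded; this also follows from Remark~\ref{rem:pencil-basics} once independence is established. Because the domain is a product, the supremum of a coordinate function $z_i$ ($i\leq d'$) over $\Omega(1)$ equals its supremum over $\Omega'(1)$: the other factor is nonempty, containing $0$, and does not affect $z_i$. The same holds for the coordinates coming from $\Omega''$. Summing gives $\rho_\Omega=\rho_{\Omega'}+\rho_{\Omega''}$.

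None of these steps is a real obstacle. The only point needing care is the bookkeeping of the shuffle unitary in the first step, that is, writing the tensor ordering consistently with the convention $A_i\otimes X_i$ used in the paper.
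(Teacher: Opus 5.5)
Your proposal is correct and follows the paper's proof essentially step for step. Like the paper, you split a linear relation into the two diagonal blocks to get linear independence, use the block-diagonal norm identity to get $\D_Q=\Omega'\times\Omega''$, and read off boundedness and the coordinate suprema from the product description of $\Omega(1)$. Your remarks on the unitary identification of the tensor ordering and on the other factor being nonempty (it contains $0$) fill in details the paper leaves implicit.
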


\begin{proof}
The coefficients $A_i'\oplus0$ and $0\oplus A_j''$ of $Q$ are linearly
independent, since any linear relation splits into separate relations among
the coefficients of $Q'$ and $Q''$ in the two diagonal blocks.
If $A$ and $B$ are (possibly rectangular) operators, then
$\|A\oplus B\|=\max\{\|A\|,\|B\|\}$. Thus $\|Q(X,Y)\|<1$ if and only if
$\|Q'(X)\|<1$ and $\|Q''(Y)\|<1$. The claims about boundedness and about the
coordinate suprema are immediate from the resulting product description of
$\Omega(1)$, and the last claim follows by iterating the two-factor
construction.
\end{proof}

If the factors $\Omega'$ and $\Omega''$ satisfy \textup{(A1)}, with
transitive liftable subgroups $G'$ and $G''$, then their product
$\Omega=\Omega'\times\Omega''$ also satisfies \textup{(A1)}. Indeed,
$G'\times G''$ acts componentwise and transitively on
$\Omega(1)=\Omega'(1)\times\Omega''(1)$, and the corresponding NC lifts act
componentwise on the product domain. Thus
Theorem~\ref{thm:scaled-von-neumann-recipe} would be stable under products if
assumption \textup{(A2)} were also stable.

The obstruction is the bounded $H^\infty$-Gleason property. If
$f\in H^\infty(\Omega(1))$ vanishes at the origin, one can formally write
\[
  f(z,w)=\bigl(f(z,w)-f(0,w)\bigr)+f(0,w).
\]
For each fixed $w$, the first term vanishes as a function of $z$, so the
Gleason property on $\Omega'(1)$ gives divisors in the $z$-variables. The
difficulty is that the abstract property does not guarantee that these divisors
can be chosen to depend holomorphically on the parameter $w$. Thus
\textup{(A2)} requires an additional argument for product domains.

In this section we give sufficient conditions under which this obstruction
disappears, and we apply them to products of row and column balls.

\begin{proposition}\label{prop:product-Gleason}
Let $\Omega_1\subset\C^{k_1}$ and $\Omega_2\subset\C^{k_2}$ be domains, and
let $z_0\in\Omega_1$, $w_0\in\Omega_2$. Assume that $\Omega_2$ has the bounded
$H^\infty$-Gleason property at $w_0$ with constant $K_2$. Assume also that
there is a constant $K_1>0$ such that, for every
$h\in H^\infty(\Omega_1\times\Omega_2)$ satisfying $h(z_0,w)=0$ for all
$w\in\Omega_2$, there exist $H_1,\ldots,H_{k_1}\in
H^\infty(\Omega_1\times\Omega_2)$ with
\[
  h(z,w)=\sum_{i=1}^{k_1}\bigl(z_i-(z_0)_i\bigr)H_i(z,w),
  \qquad
  \|H_i\|_\infty\leq K_1\,\|h\|_\infty.
\]
Here the divisors are jointly holomorphic, and the norm bound is uniform on
the entire product domain.
Then $\Omega_1\times\Omega_2$ has the bounded $H^\infty$-Gleason property at
$(z_0,w_0)$ with constant $\max\{2K_1,K_2\}$.
\end{proposition}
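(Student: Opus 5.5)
We split $f\in H^\infty(\Omega_1\times\Omega_2)$ with $f(z_0,w_0)=0$ into a part killed on the slice $\{z_0\}\times\Omega_2$ and a part depending on $w$ alone. Set
\[
  g(w):=f(z_0,w),\qquad h(z,w):=f(z,w)-f(z_0,w).
\]
Then $g\in H^\infty(\Omega_2)$ with $\|g\|_\infty\leq\|f\|_\infty$ and $g(w_0)=0$. Also $h\in H^\infty(\Omega_1\times\Omega_2)$ is jointly holomorphic, being a difference of holomorphic functions. It satisfies $h(z_0,w)=0$ for every $w\in\Omega_2$, and $\|h\|_\infty\leq 2\|f\|_\infty$.

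First we divide $h$. By the parametric hypothesis there are $H_1,\ldots,H_{k_1}\in H^\infty(\Omega_1\times\Omega_2)$ with $h=\sum_i (z_i-(z_0)_i)H_i$ and $\|H_i\|_\infty\leq K_1\|h\|_\infty\leq 2K_1\|f\|_\infty$.

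Next we divide $g$. The bounded Gleason property of $\Omega_2$ at $w_0$ (Definition~\ref{def:Hinfty-Gleason}) gives $G_1,\ldots,G_{k_2}\in H^\infty(\Omega_2)$ with $g(w)=\sum_j(w_j-(w_0)_j)G_j(w)$ and $\|G_j\|_\infty\leq K_2\|g\|_\infty\leq K_2\|f\|_\infty$. Regard each $G_j$ as a function on the product that is constant in $z$. This keeps it holomorphic and bounded, with the same supremum norm.

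Adding the two decompositions gives
\[
  f(z,w)=\sum_{i=1}^{k_1}\bigl(z_i-(z_0)_i\bigr)H_i(z,w)+\sum_{j=1}^{k_2}\bigl(w_j-(w_0)_j\bigr)G_j(w).
\]
Each coefficient has norm at most $\max\{2K_1,K_2\}\|f\|_\infty$. This is exactly the bounded $H^\infty$-Gleason property at $(z_0,w_0)$ with the stated constant.

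The argument is essentially bookkeeping. The only delicate point is that $h$ must be jointly holomorphic and bounded on the whole product, so that the parametric hypothesis applies. This holds because the slice $w\mapsto f(z_0,w)$ is holomorphic. The substantive difficulty, namely holomorphic dependence of the divisors on the parameter $w$, has been put into the hypothesis. In the later application to row and column balls, that hypothesis would be verified via the segment formula \eqref{eq:integral-division} applied in the $z$-variables with $w$ frozen.
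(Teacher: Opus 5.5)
Your proposal is correct and follows the paper's proof essentially verbatim. It uses the same splitting $f=\bigl[f(z,w)-f(z_0,w)\bigr]+f(z_0,w)$ and divides the slice by the Gleason property of $\Omega_2$, extending the divisors constantly in $z$. It applies the parametric hypothesis to $h$ with $\|h\|_\infty\leq 2\|f\|_\infty$ and arrives at the same constant $\max\{2K_1,K_2\}$.
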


\begin{proof}
Let $f\in H^\infty(\Omega_1\times\Omega_2)$ with $f(z_0,w_0)=0$. Write
\[
  f(z,w)=\bigl[f(z,w)-f(z_0,w)\bigr]+f(z_0,w).
\]
For fixed $z_0$, the slice $w\mapsto f(z_0,w)$ lies in
$H^\infty(\Omega_2)$ and vanishes at $w_0$. By the Gleason property on
$\Omega_2$, there exist $G_1,\ldots,G_{k_2}\in H^\infty(\Omega_2)$ with
\[
  f(z_0,w)=\sum_{j=1}^{k_2}\bigl(w_j-(w_0)_j\bigr)G_j(w),
  \qquad
  \|G_j\|_\infty\leq K_2\,\|f\|_\infty.
\]
Define $\widetilde{G}_j(z,w):=G_j(w)$; then
$\widetilde{G}_j\in H^\infty(\Omega_1\times\Omega_2)$ and
\[
  f(z_0,w)=\sum_{j=1}^{k_2}\bigl(w_j-(w_0)_j\bigr)\widetilde{G}_j(z,w).
\]

Set $h(z,w):=f(z,w)-f(z_0,w)$. Then
$h\in H^\infty(\Omega_1\times\Omega_2)$, $h(z_0,w)=0$ for every
$w\in\Omega_2$, and $\|h\|_\infty\leq 2\|f\|_\infty$. By the
parameter-dependent division hypothesis, there exist
$H_1,\ldots,H_{k_1}\in H^\infty(\Omega_1\times\Omega_2)$ such that
\[
  h(z,w)=\sum_{i=1}^{k_1}\bigl(z_i-(z_0)_i\bigr)H_i(z,w),
  \qquad
  \|H_i\|_\infty\leq K_1\,\|h\|_\infty
  \leq 2K_1\,\|f\|_\infty.
\]
Therefore
\[
  f(z,w)
  =
  \sum_{i=1}^{k_1}\bigl(z_i-(z_0)_i\bigr)H_i(z,w)
  +\sum_{j=1}^{k_2}\bigl(w_j-(w_0)_j\bigr)\widetilde{G}_j(z,w).
\]
Both families of coefficients are bounded by
\[
  \max\{2K_1,K_2\}\,\|f\|_\infty.
\]
Thus $\Omega_1\times\Omega_2$ has the bounded $H^\infty$-Gleason property at
$(z_0,w_0)$.
\end{proof}

\begin{corollary}\label{cor:product-balls-Gleason}
Let
\[
  D=\Ball_{m_1}\times\cdots\times\Ball_{m_r}\subset\C^d,
  \qquad d=\sum_{j=1}^r m_j,
\]
be a finite product of Euclidean unit balls. Then $D$ has the bounded
$H^\infty$-Gleason property at~$0$.
\end{corollary}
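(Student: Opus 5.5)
We argue by induction on the number $r$ of factors, using Proposition~\ref{prop:product-Gleason} with $\Omega_1=\Ball_{m_1}$, $z_0=0$, and $\Omega_2=\Ball_{m_2}\times\cdots\times\Ball_{m_r}$, $w_0=0$. The case $r=1$ is Proposition~\ref{prop:row-column-Gleason}: the Euclidean ball has the bounded $H^\infty$-Gleason property at the origin. The inductive hypothesis supplies the Gleason property of $\Omega_2$ at $0$ with some constant $K_2$. It therefore remains to verify the parameter-dependent division hypothesis for $\Omega_1=\Ball_{m_1}$ with a uniform constant $K_1$.

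For this we use the explicit division operator \eqref{eq:integral-division} on the ball. Given $h\in H^\infty(\Ball_{m_1}\times\Omega_2)$ with $h(0,w)=0$ for all $w$, define
\[
  H_i(z,w):=\int_0^1\frac{\partial h}{\partial z_i}(tz,w)\,\dd t,\qquad i=1,\ldots,m_1.
\]
Since $\frac{\dd}{\dd t}h(tz,w)=\sum_i z_i\,\partial_{z_i}h(tz,w)$, we get $h(z,w)=h(0,w)+\sum_i z_iH_i(z,w)=\sum_i z_iH_i(z,w)$. Joint holomorphy of $H_i$ in $(z,w)$ follows by differentiating under the integral sign, since $\partial_{z_i}h$ is holomorphic and locally bounded on $\Ball_{m_1}\times\Omega_2$. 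For each fixed $w$, the slice $H_i(\cdot,w)$ is exactly the Ahern--Schneider division applied to $h(\cdot,w)\in H^\infty(\Ball_{m_1})$.

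The key point, which I expect to be the only real obstacle, is the uniform bound. The classical result cited in Proposition~\ref{prop:row-column-Gleason} says that the \emph{linear} operator $g\mapsto\bigl(\int_0^1\partial_ig(t\,\cdot)\,\dd t\bigr)_i$ is bounded on $\{g\in H^\infty(\Ball_{m_1}):g(0)=0\}$ with some norm $K$. The boundedness of this specific operator is what Rudin's treatment gives, not merely the existence of some bounded divisors. Applying it slice by slice gives
\[
  \sup_z|H_i(z,w)|\le K\sup_z|h(z,w)|\le K\|h\|_\infty
\]
for every $w$, so $\|H_i\|_\infty\le K\|h\|_\infty$ and $K_1=K$ works. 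The essential thing to check is that we use the canonical integral formula, which depends on $h$ pointwise in the parameter, rather than an abstract existence statement. This is precisely what removes the obstruction described before Proposition~\ref{prop:product-Gleason}.

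Proposition~\ref{prop:product-Gleason} then yields the bounded Gleason property of $D$ at $0$ with constant $\max\{2K_1,K_2\}$, which completes the induction.
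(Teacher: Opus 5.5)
Your proposal is correct and follows essentially the same route as the paper. The paper likewise verifies the parameter-dependent hypothesis of Proposition~\ref{prop:product-Gleason} for a single ball factor using the integral formula \eqref{eq:integral-division}, bounds it slice by slice via Proposition~\ref{prop:row-column-Gleason}, and then iterates with one ball as the first factor and the previously treated product as the parameter domain; your observation that the bound must come from this specific linear division operator, not from the mere existence of bounded divisors, is exactly what the paper relies on.
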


\begin{proof}
Each Euclidean unit ball $\Ball_m$ is a row ball $R_{1,m}$, so
Proposition~\ref{prop:row-column-Gleason} gives the bounded
$H^\infty$-Gleason property with the integral division formula on each factor.
To verify the parameter-dependent hypothesis, let $U$ be a domain and let
$h\in H^\infty(\Ball_m\times U)$ satisfy $h(0,w)=0$ for all $w\in U$.
Set
\[
  H_i(z,w):=\int_0^1\frac{\partial h}{\partial z_i}(tz,w)\,\dd t,
  \qquad i=1,\ldots,m.
\]
These functions are jointly holomorphic: locally in $(z,w)$ the integration
paths lie in a compact subset of $\Ball_m\times U$, so differentiation under
the integral is valid. Applying the bound from
Proposition~\ref{prop:row-column-Gleason} to each slice $h(\cdot,w)$ gives
$\|H_i\|_\infty\leq K_{\Ball_m}\|h\|_\infty$, uniformly in $w$, and
$h(z,w)=\sum_i z_iH_i(z,w)$.
Thus Proposition~\ref{prop:product-Gleason} applies iteratively, taking a
single ball as the first factor and the product already treated as the second.
\end{proof}

\begin{remark}
When $r\geq 2$, the boundary of
$D=\Ball_{m_1}\times\cdots\times\Ball_{m_r}$ is not smooth. The proof of
Corollary~\ref{cor:product-balls-Gleason} relies instead on the explicit
integral division formula on each ball factor.
\end{remark}

\section{Products of row balls and column balls}\label{sec:row-column}

Write
\[
  R_{p,q}:=\{Z\in\M_{p,q}(\C):\|Z\|<1\}
\]
for the Cartan domain of type~I, and let $\cR_{p,q}$ be the associated NC
operator ball as in Definition~\ref{def:cartan-nc-ball}. For $p=1$, the
associated NC operator ball is the row ball, and for $q=1$ it is the column
ball.

Fix integers $r,s\geq 0$, positive integers $q_1,\ldots,q_r$ and
$p_1,\ldots,p_s$, and set
\[
  d=\sum_{j=1}^r q_j+\sum_{\ell=1}^s p_\ell.
\]
Let
\[
  \Omega
  =
  R_{1,q_1}\times\cdots\times R_{1,q_r}
  \times
  R_{p_1,1}\times\cdots\times R_{p_s,1}
  \subset\C^d
\]
be the corresponding scalar product domain.
For each $n\geq 1$, define
\[
  \cR_{\Omega}(n)
  :=
  \cR_{1,q_1}(n)\times\cdots\times\cR_{1,q_r}(n)
  \times
  \cR_{p_1,1}(n)\times\cdots\times\cR_{p_s,1}(n),
\]
viewed as a subset of $\M_n(\C)^d$ by listing all matrix coordinates in the
order induced by the product.
A point of $\cR_{\Omega}(n)$ is simply a tuple of $n\times n$ matrices grouped
according to the $r+s$ block structures. In the von Neumann type inequality
below we restrict, as in Theorem~\ref{thm:scaled-von-neumann-recipe}, to those
product tuples whose matrix coordinates commute with one another.

\begin{lemma}\label{lemma:product-hypotheses}
The product NC operator ball
$\cR_{\Omega}=\bigsqcup_{n\geq 1}\cR_{\Omega}(n)$ over $\Omega$ is an NC
operator ball $\D_Q$ satisfying \textup{(A1)} and \textup{(A2)}, and
$\rho_{\cR_{\Omega}}=d$.
\end{lemma}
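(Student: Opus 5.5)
The plan is to assemble the lemma from results already established, treating the three assertions (NC operator ball structure, (A1), (A2), and $\rho=d$) in turn.

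\emph{NC operator ball structure.} First observe that each factor $\cR_{1,q_j}$ and $\cR_{p_\ell,1}$ is an NC operator ball $\D_{Q}$ by Definition~\ref{def:cartan-nc-ball}, with linearly independent coefficients $E_{ij}$. Applying Lemma~\ref{lem:product-nc-ball} iteratively (the block-diagonal pencil $Q=Q_1\oplus\cdots\oplus Q_{r+s}$), the levelwise product $\cR_{\Omega}$ equals $\D_Q$ with linearly independent coefficients. The same lemma gives $\rho_{\cR_{\Omega}}=\sum_j\rho_{\cR_{1,q_j}}+\sum_\ell\rho_{\cR_{p_\ell,1}}$. By Proposition~\ref{prop:cartan-nc-automorphisms} each summand equals $q_j$ (respectively $p_\ell$), so $\rho_{\cR_\Omega}=d$.

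\emph{Hypothesis (A1).} For each factor, Proposition~\ref{prop:cartan-nc-automorphisms} supplies a transitive subgroup $G_j\subseteq\Aut(R_{p,q})$ with NC lifts $\Theta^{(j)}$. Take $G=G_1\times\cdots\times G_{r+s}$ acting componentwise on $\Omega$. This action is transitive because transitivity holds in each coordinate block. The lift of $(\theta_1,\dots,\theta_{r+s})$ is $X=(X^{(1)},\dots,X^{(r+s)})\mapsto(\Theta^{(1)}_n(X^{(1)}),\dots)$. I would check that this is an NC map on $\cR_\Omega$, which reduces to the following points:
\begin{itemize}
\item direct sums and similarities act blockwise on the grouped coordinates, so each component respects them, and hence the product map does;
\item the map lands in $\cR_\Omega(n)$ because each component lands in its factor;
\item its inverse is the product of the inverse lifts, which are again NC maps.
\end{itemize}
This is the remark following Lemma~\ref{lem:product-nc-ball}, which I would simply invoke with the details above.

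\emph{Hypothesis (A2).} Each factor $R_{1,q}$ or $R_{p,1}$ is a Euclidean unit ball $\Ball_m$. So $\Omega$ is, after relabeling coordinates, a product $\Ball_{q_1}\times\cdots\times\Ball_{q_r}\times\Ball_{p_1}\times\cdots\times\Ball_{p_s}$. Corollary~\ref{cor:product-balls-Gleason} then gives the bounded $H^\infty$-Gleason property at $0$, with constant $K_\Omega$ depending only on $\Omega$ (iterating Proposition~\ref{prop:product-Gleason}). The coordinate reordering is a linear permutation, which preserves both the Gleason property and its constant. The ordering matters only in that the product ball order in the corollary must match the listing in $\cR_\Omega(n)$, and a permutation of variables handles this.

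The main obstacle is essentially bookkeeping in (A1): verifying that the componentwise lift is genuinely an NC automorphism of the product with respect to the flattened coordinate listing. The substantive difficulty, the parameter-dependent division needed for (A2) on products, has already been resolved in Corollary~\ref{cor:product-balls-Gleason}.
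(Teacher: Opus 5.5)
Your proposal is correct and follows essentially the same route as the paper. The paper likewise combines Lemma~\ref{lem:product-nc-ball} with Proposition~\ref{prop:cartan-nc-automorphisms} to get the pencil structure and $\rho_{\cR_{\Omega}}=d$, uses the componentwise product of the transitive liftable subgroups for \textup{(A1)}, and uses Corollary~\ref{cor:product-balls-Gleason} for \textup{(A2)}; your blockwise check that the componentwise lift is an NC automorphism just spells out the stability of \textup{(A1)} under products that the paper invokes.
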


\begin{proof}
By Lemma~\ref{lem:product-nc-ball}, the levelwise product of the row and
column NC operator balls is again an NC operator ball, and
its coordinate suprema are those of the factors; by
Proposition~\ref{prop:cartan-nc-automorphisms} each factor $R_{1,q_j}$ or
$R_{p_\ell,1}$ contributes $q_j$, respectively $p_\ell$, so
$\rho_{\cR_{\Omega}}=\sum_j q_j+\sum_\ell p_\ell=d$.
Moreover, property \textup{(A1)} is stable under such products, and hence
$\cR_{\Omega}$ satisfies \textup{(A1)}. Finally, at the scalar level, each
$R_{1,q}$ and $R_{p,1}$ is a Euclidean ball. Therefore $\Omega$ has the bounded
$H^\infty$-Gleason property at the origin by
Corollary~\ref{cor:product-balls-Gleason}, so \textup{(A2)} holds.
\end{proof}

\begin{corollary}\label{cor:row-column-product}
With notation as above, for every commuting tuple $T\in\cR_{\Omega}(n)$ and
every $f\in H^\infty(\Omega)$,
\[
  \|f(T)\|\leq C_{\Omega,n}\,\|f\|_{H^\infty(\Omega)},
  \qquad
  C_{\Omega,n}=(2dK_\Omega)^{n-1},
\]
where $K_\Omega$ is the Gleason constant of $\Omega$ at~$0$ from
Corollary~\ref{cor:product-balls-Gleason}.
\end{corollary}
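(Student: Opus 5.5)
The corollary follows directly from Theorem~\ref{thm:scaled-von-neumann-recipe} applied to the NC operator ball $\cR_{\Omega}$.

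First, by Lemma~\ref{lemma:product-hypotheses}, $\cR_{\Omega}$ is an NC operator ball $\D_Q$ whose scalar level is $\Omega$. It satisfies \textup{(A1)}, and it satisfies \textup{(A2)} with Gleason constant $K_\Omega$, the constant provided by Corollary~\ref{cor:product-balls-Gleason}. The same lemma gives $\rho_{\cR_{\Omega}}=d$.

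With these hypotheses verified, Theorem~\ref{thm:scaled-von-neumann-recipe} applies. For every commuting $T\in\cR_{\Omega}(n)$ and every $f\in H^\infty(\cR_{\Omega}(1))=H^\infty(\Omega)$ it yields
\[
  \|f(T)\|\leq(2\rho_{\cR_\Omega}K_\Omega)^{n-1}\|f\|_{H^\infty(\Omega)}.
\]
Substituting $\rho_{\cR_\Omega}=d$ gives exactly the stated constant $(2dK_\Omega)^{n-1}$.

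For $n=1$ the formula gives $(2dK_\Omega)^0=1$, which matches $C_{\Omega,1}=1$ in the theorem. So no separate case is needed.

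The only point needing care is bookkeeping. One must check that the coordinate ordering used to define $\cR_\Omega(n)$ matches the one used for the pencil $Q$ in Lemma~\ref{lem:product-nc-ball}. This ensures that the \textup{(A2)} decomposition is taken with respect to the same coordinates $z_1,\ldots,z_d$ in which $T$ is written, and that commutativity means all $d$ matrix coordinates commute. Both hold by construction, so I expect no real obstacle; all the substantive work sits in the cited lemma and theorem.
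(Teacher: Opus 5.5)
Your proposal is correct and matches the paper's proof, which combines Theorem~\ref{thm:scaled-von-neumann-recipe} with Lemma~\ref{lemma:product-hypotheses}. That lemma supplies \textup{(A1)}, \textup{(A2)} with constant $K_\Omega$, and $\rho_{\cR_\Omega}=d$; your remarks on the $n=1$ case and the coordinate ordering are bookkeeping the paper leaves implicit.
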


\begin{proof}
Combine Theorem~\ref{thm:scaled-von-neumann-recipe} with
Lemma~\ref{lemma:product-hypotheses}.
\end{proof}

\begin{remark}[The polydisc case]
When all factors are one-dimensional, that is, when $q_j=p_k=1$ for all
$j,k$, the scalar domain $\Omega$ is the unit polydisc $\D^d$, and
$\cR_{\Omega}$ is the NC unit polydisc. Thus
Corollary~\ref{cor:row-column-product} gives a finite-dimensional von Neumann
type inequality for commuting $n\times n$ contractions.

This special case belongs to the classical theory of von Neumann inequalities
on the polydisc (see~\cite{sznagyfoias1970,shalit2021dilation}). For $d=1$ this
is von Neumann's inequality~\cite{vonneumann1951spektral}, and for $d=2$ it is
Ando's theorem~\cite{ando1963pair}; in both cases the optimal constant is $1$
for every $n$. For $d\geq 3$ the inequality fails with constant $1$
\cite{varopoulos1974inequality}, and no constant independent of the matrix size
$n$ is known; whether such a constant exists is a well-known open problem, to
which we return in Remark~\ref{rem:lower-bounds-row-column}. For each fixed $n$
finite constants do exist, and much sharper estimates than ours are known: work
of Daher, as presented by Pisier, gives an upper bound of order
$(\log(n+1))^d$, up to
constants depending on $d$; see
\cite[Chapter~4, Corollary~4.21]{pisier2001similarity}. The bound obtained from
Theorem~\ref{thm:scaled-von-neumann-recipe} is not sharp in the polydisc case.
The same structural argument applies uniformly to products of row and column NC
operator balls, and in those cases the resulting estimate is new.
\end{remark}

\begin{remark}[Lower bounds from row and column factors]
\label{rem:lower-bounds-row-column}
Let $C_{\Omega,n}^{\mathrm{opt}}$ denote the optimal constant in
Corollary~\ref{cor:row-column-product}. Write $C_{m,n}^{\mathrm{row}}$ for the
smallest constant such that
\[
  \|f(T)\|\leq C_{m,n}^{\mathrm{row}}\|f\|_{H^\infty(\Ball_m)}
\]
for every commuting $T\in\cR_{1,m}(n)$ and every
$f\in H^\infty(\Ball_m)$. Here $m$ is the number of variables and $n$ is the
matrix size. If at least one factor has dimension at least two, then
$C_{\Omega,n}^{\mathrm{opt}}$ cannot remain bounded as $n\to\infty$.
Indeed, suppose first that $q_j\geq 2$ for some row factor. By setting all
other factors equal to zero, $\cR_{\Omega}(n)$ contains a copy of the row ball
$\cR_{1,q_j}(n)$. Taking functions that depend only on this factor gives
$C_{\Omega,n}^{\mathrm{opt}}\geq C_{q_j,n}^{\mathrm{row}}$, so any lower-bound
example for the row ball gives the same lower bound for
$C_{\Omega,n}^{\mathrm{opt}}$.

Hartz--Richter--Shalit show that the optimal constants for the row ball in
$m\geq 2$ variables tend to infinity with the matrix size; more precisely, if
$N_k=\binom{m+k}{m}$, then their lower bounds imply
\[
  C_{m,N_k}^{\mathrm{row}}
  \geq
  (2\pi)^{\frac{m-1}{4}}m^{-\frac14}k^{\frac{m-1}{4}},
\]
see \cite[Remark~4.8]{hartzrichtershalit2021row}. Therefore, if some
$q_j\geq 2$, then $C_{\Omega,n}^{\mathrm{opt}}\to\infty$ as $n\to\infty$.
The same conclusion holds when some column factor has $p_k\geq 2$, by applying
the row-ball lower bounds after transposing the matrix tuples.

If every factor is one-dimensional, so that $\Omega$ is the unit polydisc
$\D^d$, the same conclusion is not known. For $d=1$ and $d=2$ the optimal
constants equal $1$ for every $n$, by von Neumann's inequality and Ando's
theorem. For $d\geq 3$ it is a well-known open problem whether
$C_{\Omega,n}^{\mathrm{opt}}$ remains bounded as $n\to\infty$; see
\cite[Chapter~1]{pisier2001similarity} and \cite{hartz2025polydisc}.
\end{remark}

\section{Conditional results for general Cartan domains}\label{sec:cartan}

The preceding section gives an unconditional result for products of row and
column balls. For a general type-I Cartan domain $R_{p,q}$, the associated NC
operator ball $\cR_{p,q}$ has $\rho_{\cR_{p,q}}=pq$ and satisfies
\textup{(A1)}, by Definition~\ref{def:cartan-nc-ball} and
Proposition~\ref{prop:cartan-nc-automorphisms}.

The remaining condition is the bounded $H^\infty$-Gleason property
\textup{(A2)}. This holds when $\min(p,q)=1$, by
Proposition~\ref{prop:row-column-Gleason}. For $\min(p,q)\geq 2$, we are not
aware of a proof of this property in the literature. We therefore state the
following corollary under hypothesis \textup{(A2)}.

\begin{corollary}\label{cor:cartan-conditional}
Let $p,q\geq 1$, and assume that $R_{p,q}$ has the bounded
$H^\infty$-Gleason property at~$0$ with constant $K_{p,q}$. Then for every
commuting tuple $T\in\cR_{p,q}(n)$ and every $f\in H^\infty(R_{p,q})$,
\[
  \|f(T)\|
  \leq
  C_{p,q,n}\,\|f\|_{H^\infty(R_{p,q})},
\]
where $C_{p,q,1}=1$ and, for $n\geq 2$,
\[
  C_{p,q,n}=(2pqK_{p,q})^{n-1}.
\]
\end{corollary}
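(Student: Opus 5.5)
The plan is to deduce Corollary~\ref{cor:cartan-conditional} directly from Theorem~\ref{thm:scaled-von-neumann-recipe}, applied to $\Omega=\cR_{p,q}$. We only need to check that $\cR_{p,q}$ is an NC operator ball satisfying \textup{(A1)} and \textup{(A2)}, and to compute $\rho_{\cR_{p,q}}$. All of these are already established or assumed.

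First, Definition~\ref{def:cartan-nc-ball} exhibits $\cR_{p,q}$ as $\D_Q$ for the linear pencil $Q(X)=\sum_{i,j}E_{ij}\otimes X_{ij}$. Its coefficients $E_{ij}\in\M_{p,q}(\C)$ are linearly independent, so the standing framework of Section~\ref{sec:recipe} applies, and the scalar level is $\cR_{p,q}(1)=R_{p,q}$. Second, Proposition~\ref{prop:cartan-nc-automorphisms} supplies the transitive subgroup $G_{p,q}\subseteq\Aut(R_{p,q})$ whose members admit NC lifts to automorphisms of $\cR_{p,q}$; this is exactly \textup{(A1)}. The same proposition gives $\sup_{Z\in R_{p,q}}|Z_{ij}|=1$ for every coordinate, hence $\rho_{\cR_{p,q}}=pq$. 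Third, \textup{(A2)} is the hypothesis of the corollary: $R_{p,q}$ has the bounded $H^\infty$-Gleason property at $0$ with constant $K_{p,q}$, so $K_{\Omega}=K_{p,q}$.

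Substituting these into Theorem~\ref{thm:scaled-von-neumann-recipe} gives, for every commuting $T\in\cR_{p,q}(n)$ and every $f\in H^\infty(R_{p,q})$, the bound $\|f(T)\|\leq C_{\Omega,n}\|f\|_\infty$. Here $C_{\Omega,1}=1$ and $C_{\Omega,n}=(2\rho_{\Omega}K_{\Omega})^{n-1}=(2pqK_{p,q})^{n-1}$, which is the claimed $C_{p,q,n}$.

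There is no genuine obstacle, since the only hard input, \textup{(A2)} for $\min(p,q)\geq2$, is assumed. The one point to handle with care is that the Gleason constant in the hypothesis is taken with respect to the same coordinates $Z_{ij}$ that index the pencil. Since the theorem is stated for the coordinate functions $z_i$ of $\Omega(1)\subset\C^d$ with $d=pq$, the identification $z\leftrightarrow(Z_{ij})$ from Definition~\ref{def:cartan-nc-ball} must be used consistently.
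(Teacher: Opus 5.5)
Your proposal is correct and follows essentially the same route as the paper's proof: identify $\cR_{p,q}=\D_Q$ via Definition~\ref{def:cartan-nc-ball}, take \textup{(A1)} and $\rho_{\cR_{p,q}}=pq$ from Proposition~\ref{prop:cartan-nc-automorphisms}, use the hypothesis as \textup{(A2)} with $K_{\Omega}=K_{p,q}$, and read off $C_{p,q,n}=(2pqK_{p,q})^{n-1}$ from Theorem~\ref{thm:scaled-von-neumann-recipe}. Your remark about using the coordinates $Z_{ij}$ consistently is harmless bookkeeping that the paper leaves implicit.
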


\begin{proof}
The scalar domain $R_{p,q}$ lies in $\C^{pq}$, so here $d=pq$.
By Definition~\ref{def:cartan-nc-ball} and
Proposition~\ref{prop:cartan-nc-automorphisms}, the associated NC operator ball
$\cR_{p,q}$ satisfies \textup{(A1)} and has $\rho_{\cR_{p,q}}=pq$.
The assumed bounded $H^\infty$-Gleason property is precisely \textup{(A2)}.
The conclusion follows from Theorem~\ref{thm:scaled-von-neumann-recipe}.
\end{proof}

With additional hypotheses ensuring that the bounded $H^\infty$-Gleason
property passes to products, the same argument gives a von Neumann type
inequality for product domains built from general type~I Cartan factors. The
next corollary records the two-factor case; finite products can be handled
iteratively under the corresponding hypotheses.

\begin{corollary}\label{cor:cartan-product-conditional}
Let
\[
  \Omega=R_{p_1,q_1}\times R_{p_2,q_2}\subset\C^d,
  \qquad
  d=p_1q_1+p_2q_2,
\]
and let
\[
  \cR_{\Omega}(n):=\cR_{p_1,q_1}(n)\times\cR_{p_2,q_2}(n)
\]
be the corresponding product NC operator ball. Assume that both scalar factors
have the bounded $H^\infty$-Gleason property at~$0$, with constants $K_1$ and
$K_2$, and that, after possibly interchanging the factors and their constants,
the first factor satisfies the parameter-dependent division hypothesis of
Proposition~\ref{prop:product-Gleason} with the second factor as parameter
domain and with constant $K_1$. Then for every
commuting tuple $T\in\cR_{\Omega}(n)$ and every $f\in H^\infty(\Omega)$,
\[
  \|f(T)\|
  \leq
  C_{\Omega,n}\,\|f\|_{H^\infty(\Omega)},
\]
where $C_{\Omega,1}=1$ and, for $n\geq 2$,
\[
  C_{\Omega,n}=(2dK_{\Omega})^{n-1},
  \qquad
  K_{\Omega}=\max\{2K_1,K_2\}.
\]
\end{corollary}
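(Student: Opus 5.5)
The plan is to verify the hypotheses \textup{(A1)} and \textup{(A2)} for the product NC operator ball $\cR_{\Omega}$, compute $\rho_{\cR_{\Omega}}=d$, and then apply Theorem~\ref{thm:scaled-von-neumann-recipe} directly.

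\emph{Structure and $\rho$.} By Definition~\ref{def:cartan-nc-ball}, each factor $\cR_{p_j,q_j}$ is an NC operator ball $\D_{Q_j}$ with linearly independent coefficients. Lemma~\ref{lem:product-nc-ball} then shows that $\cR_{\Omega}=\D_Q$ with $Q=Q_1\oplus Q_2$, again with linearly independent coefficients. The same lemma gives $\rho_{\cR_\Omega}=\rho_{\cR_{p_1,q_1}}+\rho_{\cR_{p_2,q_2}}$. By Proposition~\ref{prop:cartan-nc-automorphisms} the two summands are $p_1q_1$ and $p_2q_2$, so $\rho_{\cR_\Omega}=d$.

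\emph{Hypothesis \textup{(A1)}.} Proposition~\ref{prop:cartan-nc-automorphisms} supplies transitive subgroups $G_{p_j,q_j}\subseteq\Aut(R_{p_j,q_j})$ whose elements have NC lifts $\Theta^{(j)}$. Take $G=G_{p_1,q_1}\times G_{p_2,q_2}$ acting componentwise. It acts transitively on $R_{p_1,q_1}\times R_{p_2,q_2}$. Each element lifts to $(X,Y)\mapsto(\Theta^{(1)}_n(X),\Theta^{(2)}_n(Y))$. This map respects direct sums and joint similarities, because each component does; its inverse is the product of the inverses; and it maps $\cR_\Omega(n)$ bijectively onto itself levelwise. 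This is the stability remark following Lemma~\ref{lem:product-nc-ball}.

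\emph{Hypothesis \textup{(A2)}.} After interchanging the factors if necessary, the first factor satisfies the parameter-dependent division hypothesis of Proposition~\ref{prop:product-Gleason} with constant $K_1$, taking $z_0=0$ and the second factor as parameter domain. The second factor has the bounded $H^\infty$-Gleason property at $w_0=0$ with constant $K_2$. Proposition~\ref{prop:product-Gleason} therefore gives the bounded Gleason property of $\Omega$ at $(0,0)$ with constant $K_\Omega=\max\{2K_1,K_2\}$.

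One bookkeeping point needs care. If the factors were interchanged, the coordinates of $\C^d$ are permuted. This affects neither $\rho$, nor the Gleason property at the origin, nor the statement itself, since $f$ and $T$ are simply relabelled and $\rho$ and $K$ are symmetric under relabelling. The only genuinely nontrivial input is the parameter-dependent division, and that is assumed.

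\emph{Conclusion.} Theorem~\ref{thm:scaled-von-neumann-recipe} now gives $C_{\Omega,1}=1$ and $C_{\Omega,n}=(2\rho_{\cR_\Omega}K_\Omega)^{n-1}=(2dK_\Omega)^{n-1}$ for $n\geq 2$.
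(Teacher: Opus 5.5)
Your proposal is correct and follows the paper's proof essentially step for step. You use Lemma~\ref{lem:product-nc-ball} and Proposition~\ref{prop:cartan-nc-automorphisms} for the NC-ball structure and $\rho_{\cR_\Omega}=d$, the componentwise liftable group for (A1), Proposition~\ref{prop:product-Gleason} for (A2) with $K_\Omega=\max\{2K_1,K_2\}$, and then Theorem~\ref{thm:scaled-von-neumann-recipe}; your explicit checks that the product lift is an NC automorphism and that interchanging the factors is only a coordinate relabelling are correct details the paper leaves implicit.
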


\begin{proof}
By Lemma~\ref{lem:product-nc-ball}, the levelwise product of the two NC
operator balls is an NC operator ball, with
$\rho_{\cR_{\Omega}}=p_1q_1+p_2q_2=d$ by
Proposition~\ref{prop:cartan-nc-automorphisms}. By that same proposition each
factor has a transitive liftable subgroup, and their componentwise action gives
\textup{(A1)} for the product. The bounded
$H^\infty$-Gleason property for $\Omega$ follows from
Proposition~\ref{prop:product-Gleason}, with product constant
$K_{\Omega}=\max\{2K_1,K_2\}$. Thus $\cR_{\Omega}$ satisfies both
\textup{(A1)} and \textup{(A2)}, and the conclusion follows from
Theorem~\ref{thm:scaled-von-neumann-recipe}.
\end{proof}

\begin{remark}
Corollary~\ref{cor:row-column-product} is unconditional because its scalar
factors are Euclidean balls. Corollary~\ref{cor:cartan-conditional}, by
contrast, assumes the bounded $H^\infty$-Gleason property at the origin. The
NC operator-ball structure and automorphism condition \textup{(A1)} are
already established for these domains.
\end{remark}

\section*{Acknowledgements}

\paragraph{AI disclosure.}
AI tools were used to assist with wording, LaTeX preparation, notation,
literature searches, and reference checks, including searches concerning the
bounded $H^\infty$-Gleason property for Cartan domains. The mathematical ideas
and proofs were developed by the author, who takes full responsibility for
the content and correctness of the paper.

\bibliographystyle{plainnat}
\bibliography{refs}

\end{document}